\newtheorem{theorem}{Theorem}[section]
\newtheorem{lemma}[theorem]{Lemma}
\theoremstyle{definition}
\newtheorem{definition}[theorem]{Definition}
\newtheorem{Notation}[theorem]{Notation}
\newtheorem{corollary}[theorem]{Corollary}
\theoremstyle{remark}
\newtheorem{remark}[theorem]{Remark}
\numberwithin{equation}{section}
\begin{document}

\title{RINGS AND SUBRINGS OF CONTINUOUS FUNCTIONS WITH COUNTABLE RANGE}

\author{ Sudip Kumar Acharyya}
\address{Department of Pure Mathematics, University of Calcutta, 35, Ballygunge Circular Road, Kolkata - 700019, INDIA} 
\email{sdpacharyya@gmail.com}

\author{Rakesh Bharati}
\address{Department of Pure Mathematics, University of Calcutta, 35, Ballygunge Circular Road, Kolkata - 700019, INDIA} 
\email{bharti.rakesh292@gmail.com}

\thanks{The second author acknowledges financial support from University Grand Commission, New Delhi, for the award of research fellowship (File No. 16-9(June 2018)/2019 (NET/CSIR))}

\author{A. Deb Ray }
\address{Department of Pure Mathematics, University of Calcutta, 35, Ballygunge Circular Road, Kolkata - 700019, INDIA} 
\email{debrayatasi@gmail.com}

\begin{abstract}
	Intermediate rings of real valued continuous functions with countable range on a Hausdorff zero-dimensional space $X$ are introduced in this article. Let $\Sigma_c(X)$ be the family of all such intermediate rings $A_c(X)$'s which lie between $C_c^*(X)$ and $C_c(X)$. It is shown that the structure space of each $A_c(X)$ is $\beta_0X$, the  Banaschewski compactification of $X$. $X$ is shown to be a $P$-space if and only if each ideal in $C_c(X)$ is closed in the $m_c$-topology on it. Furthermore $X$ is realized to be an almost $P$-space when and only when each maximal ideal/ $z$-ideal in $C_c(X)$ becomes a $z^0$-ideal. Incidentally within the family of almost $P$-spaces, $C_c(X)$ is characterized among all the members of $\Sigma_c(X)$ by virtue of either of these two properties. Equivalent descriptions of pseudocompact condition on $X$ are given via $U_c$-topology, $m_c$-topology and norm on $C_c(X)$. The article ends with a result which essentially says that $z^0$-ideals in a typical $A_c(X)$ $\in$ $\Sigma_c(X)$ are precisely the contraction of $z^0$-ideals in $C_c(X)$.
\end{abstract}

\keywords{Intermediate rings, zero-dimensional space, $P$-spaces, almost $P$-spaces, $m_c$-topology, $z^0$-ideals, Banaschewski compactification.}
\subjclass[2010]{54C40}
\maketitle

\section{Introduction}
\noindent In what follows $X$ stands for a completely regular Hausdorff topological space and $C(X)$ as usual denotes the ring of all real valued continuous functions on $X$. $C^*(X)$ designates the subring of $C(X)$ containing all those members which are bounded over $X$. Suppose $C_c(X)$ is the subset of $C(X)$ consisting of those functions $f$ for which $f(X)$ is a countable subset of $\mathbb{R}$ and $C_c^*(X)$=$C_c(X)$$\cap$$C^*(X)$. It is well known that $C_c(X)$ (respectively $C_c^*(X)$) is a subring as well as a sublattice of $C(X)$ (respectively $C^*(X)$). These two rings $C_c(X)$ and $C_c^*(X)$ have received the attention of a few experts in this area only recently. We refer to the reader the articles \cite{ref5},\cite{ref11},\cite{ref20},\cite{ref15} in this connection. A natural expectation has cropped up as a bye-product of these recent investigations that, there is a hidden interplay existing between the topological structure of $X$ and the ring and the lattice structure of $C_c(X)$ and $C_c^*(X)$. To study this interaction in an efficient manner the authors in~\cite{ref14} have already discovered that one can stick to a well chosen class of spaces viz. the zero-dimensional Hausdorff topological space $X$. Indeed it is proved in (\cite{ref14}, Theorem 4.6) that starting from any topological space $X$ (not necessarily even completely regular), one can construct a Hausdorff zero-dimensional space $Y$ such that the ring $C_c(Y)$ is isomorphic to the ring $C_c(X)$. This may be called the analogous fact for its classical antecedent in the theory of $C(X)$ which says that any topological space $X$ can give rise to a completely regular Hausdorff space $Y$ for which $C(X)$ is isomorphic to $C(Y)$ (\cite{ref13}, Theorem 3.9). Therefore in the study of $C_c(X)$ and $C_c^*(X)$ vis-a-vis the space $X$ the ambient topological space $X$ may well chosen to be Hausdorff and zero-dimensional in the sense that clopen sets make base for the topology on $X$. We will stick to this convention throughout this article. Furthermore an ideal $I$ unmodified in any ring R in this paper will always stand for a proper ideal.

It is a standard result in the theory of Rings of Continuous functions that the structure space of $C(X)$ and $C^*(X)$ are both $\beta X$, the Stone-$\check{C}$ech compactification of $X$ (7N, \cite{ref13}). As a countable counterpart of the result, it is proved in (\cite{ref5}, Remark~3.6) that the structure space of $C_c(X)$ is $\beta_0X$, the largest zero-dimensional compactification of a zero-dimensional Hausdorff space $X$, also known as Banaschewski compactification of $X$. The structure space of a commutative ring $R$ with unity stands for the set of all maximal ideals of $R$ equipped with the familiar hull-kernel topology. In the present article we have initiated the study on intermediate rings viz those rings that lie between $C_c^*(X)$ and $C_c(X)$. Let $\Sigma_c(X)$ stand for the aggregate of all such intermediate rings. In section 2 of the present article we establish that if $A_c(X)$ $\in $ $\Sigma_c(X)$ then the structure space of $A_c(X)$ is also $\beta_0X$ (Theorem \ref{t-2.7}). This generalizes the Proposition mentioned in \cite{ref5}. This is incidentally the first important technical result in this article. A space X is termed as a $CP$-space in \cite{ref14} if the ring $C_c(X)$ is regular in the sense of Von-Neumann and several equivalent versions of this property are recorded in (\cite{ref14}, Theorem 5.8). These are natural counterparts of the corresponding equivalent descriptions of a $P$-space in the classical setting of $C(X)$ as mentioned in (4J, \cite{ref13}). It is also proved in the same article (\cite{ref14}, Corollary 5.7) that a zero-dimensional space $X$ is a $CP$-space if and only if it is a $P$-space. In section 3 of the present article we introduce $m_c$-topology on $C_c(X)$ as a counterpart for the present set up of the well known $m$-topology on $C(X)$, introduced longtime back by Hewitt in 1948 \cite{ref23}. We prove that if $I$ is an ideal of $C_c(X)$ , then the closure of $I$ in the $m_c$-topology coincides with the intersection of all the maximal ideals of $C_c(X)$ which contain $I$ (Theorem \ref{t-3.8}). From this it follows that a zero-dimensional space $X$ is $P$-space if and only if each ideal in $C_c(X)$ is closed in $m_c$-topology (Theorem \ref{t-3.10}). We further  establish that if $A_c(X)$ $\in $ $\Sigma_c(X)$ is properly contained in $C_c(X)$ then it is never Von-Neumann regular (Theorem\ref{t-3.17}). Thus within the class of $P$-spaces $X$, $C_c(X)$ is characterized amongst all the intermediate rings by the property that it is Von-Neumann regular.

A Tychonoff space $X$ is called almost $P$ if the interior  of each non empty zero set in $X$ is open. These spaces are introduced in \cite{ref17} as a generalization of $P$-spaces. In section 4 of this article we make some query about when a zero-dimensional space $X$ becomes an almost $P$-space. We establish that $X$ is almost $P$ if and only if each maximal ideal of $C_c(X)$ is a $z^0$-ideal and this happens when and only when each $z$-ideal in $C_c(X)$ becomes a $z^0$-ideal (Theorem \ref{t-4.10}). It turns out that within the class of almost $P$-space $X$ , $C_c(X)$ is the unique ring amongst all the intermediate rings that lie between $C_c^*(X)$ and $C_c(X)$ which enjoys either of these two properties (Theorem \ref{t-4.11},\ref{t-4.12}).

A space $X$ is called pseudocompact if $C(X)=C^*(X)$. It is established by the authors in (\cite{ref15}, Theorem 6.3) that a zero-dimensional space $X$ is pseudocompact if and only if  $C_c(X)=C_c^*(X)$ . In section 5 of this article we find out a few equivalent versions of pseudocompactness in terms of both the $m_c$-topology on  $C_c(X)$ and $U_c$-topology on  $C_c(X)$ (Theorem \ref{t-5.2}, \ref{t-5.3}). The $U_c$-topology on  $C_c(X)$ may be called the countable counterpart of the well known $U$-topology or the topology of uniform convergence on  $C(X)$ (See 2M, 2N, \cite{ref13}).

In section 6 of this article we examine when do a few chosen subrings of $C_c(X)$ become Noetherian/ Artinian (Theorem \ref{t-6.4}). It follows as special cases that a zero-dimensional space $X$ is finite if and only if $C_c(X)$ is Noetherian if and only if $C_c(X)$ is Artinian. Furthermore a locally compact zero-dimensional space $X$ is seen to be finite if and only if $C_c(X) $$\cap$$  C_K(X) $  becomes Noetherian/Artinian if and only if $C_c(X)\cap C_\infty(X)$ becomes Noetherian/ Artinian. Here $C_\infty(X)$ stands for the rings of all real valued continuous functions on $X$ which vanish at infinity and $C_K(X)$ is the subring of $C_\infty(X)$ containing those functions which have compact support.

In the final section 7 of this article we give an explicit formula for $z^0$-ideals in a typical intermediate ring $A_c(X)$ $\in \Sigma_c(X)$ (Theorem \ref{t-7.1}). From this it follows that $z^o$-ideals of $A_c(X)$, in particular $z^0$-ideals of $C_c(X)$ or $C_c^*(X)$ are the contraction of $z^0$-ideals in $C(X)$.

\section{STRUCTURE SPACES OF INTERMEDIATE RINGS}

We recall from (7M \cite{ref13})  that if $A$ is a commutative ring with unity and $\mathcal{M}(A)$  the set of all  maximal ideals of A and for each $a\in A$ if we set $\mathcal{M}_a$=$\{M\in\mathcal{M}(A):a\in M\}$, then the family \{$\mathcal{M}_a$: $a\in A$\} turns out to be a closed base for the hull-kernel topology on $\mathcal{M}(A)$. For any $\mathcal{M}_0 $ $\subseteq $ $\mathcal{M}(A)$ the closure of $\mathcal{M}_0$=\{$M\in \mathcal{M}(A):M\supset\cap\mathcal{M}_0$\}. $\mathcal{M}(A)$ equipped with this topology known as the structure space of $A$ is a compact $T_1$ topological space and is Hausdorff if and only if given any two distinct maximal ideals $M_1$ , $M_2$ in $A$, there exist points $a_1$, $a_2$ in $A$ such that $a_1\notin M_1$ and $a_2\notin M_2$ and $a_1a_2\in\cap\mathcal{M}(A)$. In what follows we will let $A_c(X)$ stand for a typical intermediate ring lying between the two rings $C_c^*(X)$ and $C_c(X)$. Suppose $Max (A_c(X))$ denotes the structure space of $A_c(X)$.

\begin{theorem}\label{t-2.1}
$Max(A_c(X))$ is a (compact) Hausdorff space.
\end{theorem}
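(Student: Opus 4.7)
The plan is to apply the Hausdorff criterion recalled in the paragraph preceding the theorem: for distinct maximal ideals $M_1,M_2$ of $A_c(X)$ I must produce $a_1,a_2\in A_c(X)$ with $a_1\notin M_1$, $a_2\notin M_2$, and $a_1a_2\in\bigcap\mathcal{M}(A_c(X))$. In fact I will arrange $a_1a_2=0$, which automatically lies in every ideal. Compactness and the $T_1$ property come free with the hull-kernel topology, so Hausdorffness is the only real content.

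First I pick $f\in M_1\setminus M_2$. Since $f\notin M_2$ and $M_2$ is maximal, $M_2+(f)=A_c(X)$, so $1=b+fg$ for some $b\in M_2$ and $g\in A_c(X)$. Set $a:=fg$; then $a+b=1$, $a\in M_1$, and $b=1-a\in M_2$ forces $a\notin M_2$ and $b\notin M_1$. The crux of the argument is to convert $a$ into a characteristic function by exploiting the countability of $a(X)\subseteq\mathbb{R}$, which holds because $a\in A_c(X)\subseteq C_c(X)$. I choose a real number $t\in(0,1)\setminus a(X)$, possible since $a(X)$ is countable, and set $V:=\{x\in X:a(x)>t\}$. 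Because $t\notin a(X)$, the set $V$ coincides with $\{x:a(x)\ge t\}$ and is therefore both open and closed, so $\chi_V,\chi_{V^c}\in C_c^*(X)\subseteq A_c(X)$ and $\chi_V\chi_{V^c}=0$.

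It remains to verify $\chi_{V^c}\notin M_1$ and $\chi_V\notin M_2$. Consider $u:=a\chi_V+\chi_{V^c}\in A_c(X)$: on $V$ it equals $a>t$ and on $V^c$ it equals $1$, so $u\ge\min(t,1)>0$ everywhere. Consequently $1/u$ is continuous, bounded, and countably valued, giving $1/u\in C_c^*(X)\subseteq A_c(X)$, so $u$ is a unit of $A_c(X)$. Since $a\chi_V\in M_1$, the identity $\chi_{V^c}=u-a\chi_V$ shows $\chi_{V^c}\equiv u\pmod{M_1}$, whence $\chi_{V^c}\notin M_1$. A symmetric argument applied to $u':=b\chi_{V^c}+\chi_V$, which is bounded below by $\min(1-t,1)>0$, together with $b\chi_{V^c}\in M_2$, yields $\chi_V\notin M_2$. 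Taking $(a_1,a_2)=(\chi_{V^c},\chi_V)$ then closes the argument.

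The principal technical step is the passage from the abstract element $f\in M_1\setminus M_2$ to the clopen set $V$. This hinges on two features: the countability of $a(X)$, which lets one dodge all values of $a$ when choosing the threshold $t$ and thereby renders $V$ clopen, and the inclusion $C_c^*(X)\subseteq A_c(X)$, which ensures that $\chi_V$ and the reciprocals of bounded-away-from-zero countably valued functions lie back in $A_c(X)$. Once $V$ is in hand, the rest is a short manipulation of maximal ideals.
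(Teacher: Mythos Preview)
Your proof is correct, and it takes a genuinely different route from the paper's. The paper develops, for each $f\in A_c(X)$ and each ideal $I$, the $z_c$-filters $\mathcal{Z}_A(f)$ and $\mathcal{Z}_A[I]$ (in the spirit of Redlin--Watson and Byun--Watson for intermediate subrings of $C(X)$), and then argues by contradiction that $\mathcal{Z}_A[M_1]$ and $\mathcal{Z}_A[M_2]$ must contain disjoint members $Z_1,Z_2$; from local inverses $f_1,g_1$ outside $Z_1,Z_2$ it obtains $(1-ff_1)(1-gg_1)=0$ with $1-ff_1\notin M_1$ and $1-gg_1\notin M_2$. Your argument bypasses that filter machinery entirely: after the standard reduction to $a\in M_1$, $1-a\in M_2$, you exploit the countable range of $a$ to pick a threshold $t\in(0,1)\setminus a(X)$ and obtain a clopen $V$, then use that $C_c^*(X)\subseteq A_c(X)$ to place $\chi_V,\chi_{V^c}$ and the needed reciprocals back in $A_c(X)$. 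This is essentially the same clopen-set trick the paper later isolates as Theorem~\ref{t-2.6} for proving zero-dimensionality of the structure space; you have effectively anticipated it and used it already for Hausdorffness. The paper's approach buys a reusable correspondence between ideals and $z_c$-filters; yours is shorter, self-contained, and makes the dependence on countable range and on the inclusion $C_c^*(X)\subseteq A_c(X)$ completely explicit.
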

\begin{proof}We shall prove the Hausdorffness of $Max(A_c(X))$ only. For any $f\in A_c(X)$, set $\mathcal{Z}_A(f)$=$\{Z\in Z_c(X):$ there exists $g\in A_c(X)$ such that for each $x\in X\setminus Z, f(x)g(x)=1\}$. Here $Z_c(X)=\{Z(f):f\in C_c(X\}$, the family of all zero sets in $X$ of functions lying in $C_c(X)$. For any ideal $I$ in $A_c(X)$, let $\mathcal{Z}_A[I]=\bigcup\limits_{f\in I} \mathcal{Z}_A(f)$. Then it can be proved by following the technique adopted in \cite{ref10},\cite{ref19},\cite{ref20} that $\mathcal{Z}_A(f)$ and $\mathcal{Z}_A[I]$ are both $z_c$-filter on $X$. A $z_c$-filter on $X$ is a subfamily of $Z_c(X)-\{\emptyset\}$ which is closed under finite intersection and formation of supersets (see \cite{ref14}). Furthermore if $\mathcal{F}$ is a $z_c$-filter on X, then it can be checked by using the methods in \cite{ref10},\cite{ref19},\cite{ref20} that $\mathcal{Z}_A^{-1}[\mathcal{F}]$= $\{f\in A_c(X):\mathcal{Z}_A(f)\subseteq\mathcal{F}\}$ is a (proper) ideal in $A_c(X)$. Now let $M_1$ and $M_2$ be two distinct members of $A_c(X)$. It is sufficient to produce $h_1$, $h_2$ in $\mathcal{M}_c(X)$ with $h_1\notin M_1$, $h_2\notin M_2$ such that $h_1h_2=0$. To this end we assert that there exists $Z_1\in \mathcal{Z}_A[M_1]$, $Z_2\in \mathcal{Z}_A[M_2]$ with $Z_1\cap Z_2= \emptyset$. For otherwise each member of $\mathcal{Z}_A[M_1]$ meets any member of $\mathcal{Z}_A[M_2]$ and hence $\mathcal{Z}_A[M_1] \cup\mathcal{Z}_A[M_2]$ becomes a subfamily of $Z_c(X)$ with finite intersection property. Consequently there exists a $z_c$-filter $\mathcal{F}$ on $X$ such that $\mathcal{Z}_A[M_1] \cup\mathcal{Z}_A[M_2] \subseteq \mathcal{F}$ which yields that $M_1\cup M_2\subseteq\mathcal{Z}_A^{-1} [\mathcal{F}]$= a proper ideal in $A_c(X)$, a contradiction since $M_1$ and $M_2$ are distinct maximal ideals in $A_c(X)$. So choose $f\in M_1$ and $g\in M_2$ such that $Z_1$$\in$ $\mathcal{Z}_A(f)$, $Z_2$$\in$ $\mathcal{Z}_A(g)$ and $ Z_1\cap Z_2=\emptyset$. This means that there exist $f_1, g_1\in A_c(X)$
such that for each $x\in X-Z_1$, $f(x)f_1(x)=1$ and for any $x\in X-Z_2$, $g(x)g_1(x)=1$. Now since $ff_1\in M_1$ and $gg_1\in M_2$, it follows that $1-ff_1\notin M_1$ and $1-gg_1\notin M_2$. Since $(X-Z_1)\cup(X-Z_2)= X-(Z_1\cap Z_2)=X-\emptyset=X$, it implies that $(1-ff_1).(1-gg_1)=0$.
\end{proof}

 We set for any $x\in X$, $M_{A,x}=\{f\in A_c(X):f(x)=0\}$. Then it is easy to prove on applying the first isomorphism theorem of algebra, taking care of the presence of constant functions in $A_c(X)$ that the complete list of fixed maximal ideals of $A_c(X)$ is given by $\{M_{A,x} : x \in X\}$. An ideal $I$ in $A_c(X)$ is called fixed if there exists a point on X at which all the functions in $I$ vanish. For any $f \in A_c(X)$ we set $(\mathcal{M}_A)_f=\{M\in Max (A_c(X)):f\in M\}$. Then  $\{(\mathcal{M}_A)_f:f\in A_c(X)\}$ is the family of basic closed sets in the structure space $Max(A_c(X))$ of $A_c(X)$. For $f\in A_c(X)$ and $x\in X$ , $x\in Z(f)$ if and only if $M_{A,x}\in (\mathcal{M}(A))_f \cap\{M_{A,y}:y \in X\}$. On the other hand since $X$ is zero-dimensional it follows from Proposition 4 in \cite{ref5} that $\{Z(f):f\in C_c^*(X)\}=\{Z(f):f\in A_c(X)\}= \{Z(f):f\in C_c(X)\}$ constitutes a base for the closed sets of $X$. These two facts therefore yield that the map $\psi _A:X\rightarrow Max(A_c(X))$ given by $\psi _A(x)=M_{A,x}$ which is obviously one-to-one exchanges the basic closed sets of the space $X$ and the subspace $\psi _A(X)$ of $Max(A_c(X))$. Furthermore the closure of $\psi _A(X)$ in $Max(A_c(X))$ is given by $\{M\in Max(A_c(X)):M\supseteq\bigcap\psi_A(X)\}=\{M\in Max(A_c(X)):M\supseteq\{0\}\}=Max(A_c(X))$, thus demonstrating that $\psi_A(X)$ is dense in $ Max(A_c(X))$. The above observations therefore lead to the following proposition.

\begin{theorem} \label{t-2.2}
The pair $(\Psi_A,Max (A_c(X)))$ is a Hausdorff compactification of $X$ in the following sense, which we reproduce from the monograph \cite{ref12}.
\end{theorem}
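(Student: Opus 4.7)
The claim essentially packages together the three ingredients already assembled in the discussion immediately preceding the statement, so my plan is to organize them into a clean verification that $\psi_A$ is an embedding with dense image into a compact Hausdorff space. Recall that a Hausdorff compactification of $X$ in the sense of \cite{ref12} means a pair $(e, Y)$ where $Y$ is a compact Hausdorff space and $e \colon X \to Y$ is a topological embedding whose image is dense in $Y$. So there are really four things to check: compactness, Hausdorffness, embedding, density.

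First, compactness of $\text{Max}(A_c(X))$ is standard for hull-kernel topologies on structure spaces, and Hausdorffness is precisely Theorem \ref{t-2.1}. Next I would record why $\psi_A$ is injective: given distinct $x, y \in X$, zero-dimensionality gives a clopen set $U$ with $x \in U$, $y \notin U$, and the characteristic function $\chi_{X \setminus U}$ lies in $C_c^*(X) \subseteq A_c(X)$ and separates $M_{A,x}$ from $M_{A,y}$. (Alternatively, one invokes the presence of constant functions together with the first isomorphism theorem argument already alluded to in the text to identify the fixed maximal ideals.)

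The embedding property is the one place where a little verification is needed, though the preceding paragraph does almost all the work. The basic closed sets of $X$ are $\{Z(f) : f \in A_c(X)\}$ (using Proposition 4 of \cite{ref5}, as quoted), while the basic closed sets of $\text{Max}(A_c(X))$ are $\{(\mathcal{M}_A)_f : f \in A_c(X)\}$. The identity $\psi_A^{-1}((\mathcal{M}_A)_f) = Z(f)$, together with $\psi_A(Z(f)) = (\mathcal{M}_A)_f \cap \psi_A(X)$, shows that $\psi_A$ carries a closed base of $X$ bijectively onto a closed base of the subspace $\psi_A(X)$, so it is a homeomorphism onto its image.

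Finally, for density I would just reproduce the closure computation already in the text: the closure of $\psi_A(X)$ in $\text{Max}(A_c(X))$ consists of those maximal ideals containing $\bigcap_{x \in X} M_{A,x} = \{0\}$, which is every maximal ideal. I do not anticipate any real obstacle; the only mildly delicate point is verifying that the closed base correspondence is exact (i.e.\ that the preimage under $\psi_A$ of a basic closed set in $\text{Max}(A_c(X))$ is the corresponding zero set), and that follows straight from the definitions of $M_{A,x}$ and $(\mathcal{M}_A)_f$.
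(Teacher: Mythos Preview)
Your proposal is correct and follows essentially the same approach as the paper: the paper's argument is contained in the paragraph immediately preceding the theorem statement, where it verifies injectivity of $\psi_A$, the exchange of basic closed sets via the correspondence $x\in Z(f)\Leftrightarrow M_{A,x}\in(\mathcal{M}_A)_f$, and density via the closure computation $\bigcap_{x\in X}M_{A,x}=\{0\}$, with compactness and Hausdorffness supplied by Theorem~\ref{t-2.1}. Your write-up simply makes these steps more explicit (and adds a concrete injectivity argument via characteristic functions of clopen sets, which the paper leaves as ``obviously one-to-one'').
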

 \begin{definition}
 	A (Hausdorff) compactification of a Tychonoff space $X$ stands for a pair ($\alpha $, $\alpha X$), where $\alpha X$ is a compact Hausdorff space and $\alpha:X$$\rightarrow$$\alpha X$ is a topological embedding with $\alpha(X)$ dense in $\alpha X$. For simplicity we often write $\alpha X$ instead of ($\alpha$, $\alpha X$). Let $K(X)$ be the family of all Hausdorff compactifications of $X$.
 \end{definition}
\begin{definition}
	For $\alpha X$, $\gamma X \in K(X) $, we write  $\alpha X\geqq\gamma X$ if there is a continuous map $t:\alpha X\rightarrow \gamma X	$ with the property $t\circ\alpha=\gamma$. If in this definition $'t'$ is a homeomorphism then we say that $\alpha X$ is topologically equivalent to $\gamma X$ and we write $\alpha X$$\approx$$\gamma X$. It can be proved without difficulty that for $\alpha X$,  $\gamma X$$\in$ $K(X)$, $\alpha X$$\approx$$\gamma X$ when and only when $\alpha X\geqq\gamma X$ and $\gamma X$$\geqq$$\alpha X$. Furthermore $(K(X),\geqq)$ becomes a complete upper semilattice, which has definitely then a largest member,which is incidentally $\beta X$ the Stone-$\check{C}$ech compactification of $X$. If in addition $X$ is zero-dimensional then there is a largest zero-dimensional member of $K(X)$, designated by $\beta_ 0X$, called the Banaschewski compactification of $X$. For more information on these topics see \cite{ref18}.
\end{definition}

\begin{definition}
	For a zero-dimensional space $X$, $\alpha X \in K(X)$ is said to enjoy C-extension property if given any compact Hausdorff zero-dimensional space $Y$ and a continuous map $f:X\rightarrow Y$ there exists a unique continuous map $f^\alpha:\alpha X\rightarrow Y$ such that $f^\alpha \circ \alpha=f$.
\end{definition}

It is clear from the above definition that if $\alpha X \in K(X)$ possesses $C$-extension property then $\alpha X\geqq \beta_0X$ and if in addition $\alpha X$ is zero-dimensional then $\beta_0X\geqq \alpha X$ and consequently $\alpha X \approx\beta_0X$. We need the following subsidiary result before stating the first principal technical result of this section.
\begin{theorem} \label{t-2.6}
	Let X be zero-dimensional and $A_c(X) \in \Sigma_c(X)$. Then given $f\in A_c(X)$, there exists an idempotent $e$ in $A_c(X)$ such that $e$ is multiple of $f$ and $(1-e)$ is a multiple of $(1-f)$ in this ring. $\textnormal[$A special case of this result with $A_c(X)=C_c(X)$ is proved in Remark 3.6 in \cite{ref5} $\textnormal]$.
\end{theorem}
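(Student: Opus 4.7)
The plan is to produce the idempotent $e$ as the characteristic function of a suitably chosen clopen subset of $X$, using the countable range of $f$ to guarantee the existence of such a set.

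Since $f(X)$ is a countable subset of $\mathbb{R}$, the set $(0,1)\setminus f(X)$ is uncountable, and in particular non-empty. I would pick any $c\in(0,1)\setminus f(X)$ and set $U=\{x\in X:f(x)>c\}=f^{-1}((c,\infty))$. This set is clearly open. For closedness, observe that $X\setminus U=f^{-1}((-\infty,c])$, and since $c\notin f(X)$ this coincides with $f^{-1}((-\infty,c))$, which is open by continuity of $f$. Hence $U$ is clopen, and its characteristic function $e:=\chi_U$ is a two-valued continuous function, hence an idempotent lying in $C_c^*(X)\subseteq A_c(X)$. By construction, $Z(f)\subseteq X\setminus U$ and $Z(1-f)\subseteq U$, which is what must be true for $e$ to have a chance of being a multiple of $f$ and $1-e$ a multiple of $1-f$.

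To exhibit $e$ as a multiple of $f$, define $g\colon X\to\mathbb{R}$ by $g(x)=1/f(x)$ for $x\in U$ and $g(x)=0$ for $x\notin U$. Since $f>c$ on $U$, $g$ is bounded by $1/c$; continuity is immediate from the clopenness of $U$ (each clopen piece is handled separately); and $g$ inherits a countable range from $f$. Hence $g\in C_c^*(X)\subseteq A_c(X)$, and a direct pointwise check yields $fg=e$. Symmetrically, I would set $h(x)=1/(1-f(x))$ for $x\in X\setminus U$ and $h(x)=0$ for $x\in U$; on $X\setminus U$ one has $1-f>1-c>0$, so $h$ is bounded by $1/(1-c)$, continuous, and of countable range, giving $h\in C_c^*(X)\subseteq A_c(X)$ together with $(1-f)h=1-e$.

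The only non-routine step is the clopenness of $U$, which is where the countable range of $f$ is essential; on a general $C(X)$ such a clopen level set need not exist, and this is precisely why the argument specializes to rings $A_c(X)\in\Sigma_c(X)$. Every other part of the proof is a routine pointwise verification.
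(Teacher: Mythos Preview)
Your proof is correct and is essentially identical to the paper's own argument: the paper also chooses a point $r\in(0,1)\setminus f(X)$, takes the clopen set $W=f^{-1}((-\infty,r))$ (your $X\setminus U$), defines $e$ as the characteristic function of $X\setminus W$, and writes down the same bounded quotients $1/f$ and $1/(1-f)$ on the appropriate clopen pieces. Your write-up is slightly more explicit about the bounds $1/c$ and $1/(1-c)$ and about why the range stays countable, but the route is the same.
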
 

\begin{proof}
	There exists $r,\ 0<r<1$ such that $r\notin f(X)$. Let $W=f^{-1}(-\infty,r)=f^{-1}((-\infty,r])$. So $W$ and $X-W$ are both clopen sets in $X$. The function $e:X\rightarrow R$ defined by the rule : $e(W)=0$ and $e(X-W)=1$ is clearly an idempotent in the ring $A_c(X)$. Define the functions $h:X\rightarrow R$ and $k:X\rightarrow R$ as follows: $h(W)=0$ and $h(x)=\frac{1}{f(x)}$ if $x\in X-W$. $k(X-W)=0$ and $k(x)=\frac{1}{1-f(x)}$ if $x\in W$. Clearly $h$ and $k$ are both bounded functions in $C_c(X)$ and hence both are members of the ring $A_c(X)$. It is easy to see that $e=h.f$ and $1-e=k(1-f)$.
\end{proof}
\begin{theorem} \label{t-2.7}
	$Max (A_c(X))$ is a (compact Hausdorff) zero-dimensional space. Furthermore the pair $(\Psi_A,Max(A_c(X))$ is topologically equivalent to $\beta_0 X$. If in addition $X$ is strongly zero-dimensional meaning that $\beta X$ is zero-dimensional, then $(\Psi_A, Max(A_c(X)))$ is topologically equivalent to $\beta X$.
\end{theorem}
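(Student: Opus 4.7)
The plan has three parts: first, show that $Max(A_c(X))$ is zero-dimensional by exhibiting a clopen base indexed by idempotents of $A_c(X)$; second, invoke the universal property of $\beta_0 X$ to obtain a continuous surjection $\phi:\beta_0 X \to Max(A_c(X))$ extending $\Psi_A$; third, establish injectivity of $\phi$ by separating points through those idempotents. The final clause about $\beta X$ will then follow from the standard relationship between $\beta X$ and $\beta_0 X$.

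For any idempotent $e\in A_c(X)$, the relation $e(1-e)=0$ together with the primality of every maximal ideal forces exactly one of $e,\,1-e$ into each $M\in Max(A_c(X))$, so $(\mathcal{M}_A)_e$ and $(\mathcal{M}_A)_{1-e}$ are complementary, and each is clopen. I would next show that the family $\{(\mathcal{M}_A)_{1-e}: e\text{ an idempotent in }A_c(X)\}$ is a base for the topology. Let $\{M: f\notin M\}$ be a basic open set containing some $M_0$. Since $M_0$ is maximal and $f\notin M_0$, there exists $g\in A_c(X)$ with $1-fg\in M_0$. Applying Theorem \ref{t-2.6} to $fg$ yields an idempotent $e\in A_c(X)$ together with $h,k\in A_c(X)$ satisfying $e=h(fg)$ and $1-e=k(1-fg)$. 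The second relation gives $1-e\in M_0$, so $M_0\in (\mathcal{M}_A)_{1-e}$. The first relation shows that if $f\in M$ then $e\in M$, so $1-e\notin M$; hence $(\mathcal{M}_A)_{1-e}\subseteq\{M: f\notin M\}$. This establishes zero-dimensionality.

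Now $(\Psi_A, Max(A_c(X)))$ is a zero-dimensional Hausdorff compactification of $X$ by Theorems \ref{t-2.1}, \ref{t-2.2} and the previous step. The maximality of $\beta_0 X$ among the zero-dimensional members of $K(X)$ supplies a continuous map $\phi:\beta_0 X\to Max(A_c(X))$ extending $\Psi_A$; its image is dense and closed, so $\phi$ is surjective. For injectivity, take distinct $p,q\in\beta_0 X$, fix a clopen $V\subseteq\beta_0 X$ with $p\in V$ and $q\notin V$, and put $U=V\cap X$, a clopen subset of $X$ whose characteristic function $e_U$ is an idempotent in $A_c(X)$. Define $\widehat{e_U}:Max(A_c(X))\to\{0,1\}$ by $\widehat{e_U}(M)=0$ if $e_U\in M$ and $\widehat{e_U}(M)=1$ otherwise; this is continuous because the preimage of $\{0\}$ is the clopen set $(\mathcal{M}_A)_{e_U}$, and it agrees with $e_U$ along $\Psi_A$. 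Then $\widehat{e_U}\circ\phi$ is a continuous extension of $e_U$ from $X$ to $\beta_0 X$, so by density of $X$ in $\beta_0 X$ and Hausdorffness of $\{0,1\}$ it must equal the characteristic function of $V$. Hence $\widehat{e_U}(\phi(p))=1\neq 0=\widehat{e_U}(\phi(q))$, whence $\phi(p)\neq\phi(q)$. A continuous bijection between a compact space and a Hausdorff space is a homeomorphism, so $\phi$ witnesses $(\Psi_A, Max(A_c(X)))\approx\beta_0 X$. If in addition $X$ is strongly zero-dimensional, then $\beta X$ is zero-dimensional, so $\beta X\leq\beta_0 X$ by the universal property, while $\beta X\geq\beta_0 X$ always holds by the maximality of $\beta X$ in $K(X)$; hence $\beta X\approx\beta_0 X\approx Max(A_c(X))$.

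The main obstacle lies in choosing the right idempotent in the first part: applying Theorem \ref{t-2.6} directly to $f$ need not produce an idempotent lying in $M_0$, so one must first use the maximality of $M_0$ to pass from $f$ to the product $fg$ that is congruent to $1$ modulo $M_0$, and only then apply Theorem \ref{t-2.6}. Everything else is standard compactness and density reasoning.
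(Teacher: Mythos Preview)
Your argument is correct. The zero-dimensionality step is essentially the paper's proof with the roles of $e$ and $1-e$ interchanged: the paper writes $1-g=hf$ with $g\in M$ and applies Theorem~\ref{t-2.6} to $g$, while you write $1-fg\in M_0$ and apply Theorem~\ref{t-2.6} to $fg$; these are the same manoeuvre.

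Where you genuinely diverge is in establishing $Max(A_c(X))\approx\beta_0 X$. The paper proves the $C$-extension property directly: given a compact zero-dimensional $Y$ and a continuous $f:X\to Y$, it constructs an extension $f^{\mathcal{M}}:Max(A_c(X))\to Y$ by assigning to each maximal ideal $M$ the unique point of $Y$ determined by the prime ideal $\widetilde{M}=\{g\in C_c(Y):g\circ f\in M\}$, and then verifies continuity. You instead exploit the already-known maximality of $\beta_0 X$ to obtain $\phi:\beta_0 X\to Max(A_c(X))$ and prove it is a bijection by separating points of $\beta_0 X$ with characteristic functions of clopen traces. Your route is shorter and more elementary, needing only the idempotent machinery already set up; it avoids the prime-ideal pullback construction and the somewhat delicate continuity verification in the paper. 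The paper's route, on the other hand, yields more: it shows intrinsically that $Max(A_c(X))$ itself satisfies the universal property characterising $\beta_0 X$, not merely that it happens to be homeomorphic to it, and it does not presuppose that the universal property of $\beta_0 X$ is already available. The third clause about strongly zero-dimensional $X$ is handled identically in both.
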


\begin{proof}
	We first prove (only) the zero-dimensionality of $Max(A_C(X))$, because of Theorem \ref{t-2.1}. We recall the notation that for any $f\in A_c(X)$, $(\mathcal{M}_A)_f=\{M\in Max(A_c(X)):f\in M\}$ . So let $M\in Max(A_c(X))$ and $f\in A_c(X)$ be such that $M\in Max(A_c(X))\setminus (\mathcal{M}_A)_f$. It suffices to find out a clopen set in $Max(A_c(X))$ which contains $M$ and is contained in  $Max(A_c(X))\setminus(\mathcal{M}_A)_f$. We first observe that $M\notin (\mathcal{M}_A)_f$  implies that $f\notin M$ which in turn implies that there exist $h\in A(_c(X))$ and $g\in M$ such that $1-g=hf$. By Theorem \ref{t-2.6}, there exists an idempotent $'e'$ in $A_c(X)$ such that $e$ is a multiple of $g$ and $(1-e)$ is a multiple of $(1-g)$ in the ring $A_c(X)$. Since $g\in M$, this implies that $e\in M$, in other words $M\in (\mathcal{M}_A)_e$. On the otherhand if  $N\in (\mathcal{M}_A)_f$ then $f\in N$, hence $1-g=hf\in N$ consequently $1-e\in N$ and therefore $e\notin N$ (as $N$ is a maximal ideal in $A_c(X)$) which means that $N\notin (\mathcal{M}_A)_e$. Thus  we get that $M\in (\mathcal{M}_A)_e \subseteq Max(A_c(X))\setminus(\mathcal{M}_A)_f$. We now assert that $(\mathcal{M}_A)_e=Max(A_c(X))\setminus(\mathcal{M}_A)_{1-e} $ and have $(\mathcal{M}_A)_e$ is clopen in  $Max(A_c(X)$. Indeed if $M\in (\mathcal{M}_A)_e$ then $e\in M$, which implies that $1-e\notin M$ and hence $M\notin (\mathcal{M}_A)_{1-e}$ $i.e;$ $M\in Max(A_c(X))\setminus(\mathcal{M}_A)_{1-e}$. Thus $ (\mathcal{M}_A)_e\subseteq Max(A_c(X))\setminus  (\mathcal{M}_A)_{1-e}$. From symmetry it follows that, as $(1-e)$ is an idempotent of $A_c(X)$.  $(\mathcal{M}_A)_{1-e} \subseteq Max(A_c(X))$$\setminus$$(\mathcal{M}_A)_e$, hence  $(\mathcal{M}_A)_{e}=Max(A_c(X))$$\setminus$$(\mathcal{M}_A)_{1-e}$.
	
	Now that we have proved that $ Max(A_c(X))$ is zero-dimensional, to prove the second part of the present theorem, it is sufficient to prove that $(\Psi_A,Max(A_c(X))$ enjoys the $C$-extension property. So let $Y$ be a compact Hausdorff zero-dimensional space and $f:X\rightarrow Y$ a continuous map. It is sufficient to define a continuous map $f^\mathcal{M} :Max(A_c(X))\rightarrow Y$ with the following property: $f^\mathcal{M}\circ \Psi_A=f$. To that end choose  $M\in Max(A_c(X))$ i.e; $M$ is a maximal ideal in $A_c(X)$. Set $\widetilde{M}=\{g\in C_c(Y):g\circ f\in M\}$. Note that if $g\in C_c(Y)$ then $g\circ f\in C_c(X)$. Furthermore since $Y$ is compact and $g\in C_c(Y)$ then $g(Y)$ is a bounded subset of $\mathbb{R}$, consequently $(g\circ f)(X)$ is a bounded subset of $\mathbb{R}$ and hence ($g\circ f)\in C_c^*(X)$ and therefore $g\circ f\in A_c(X)$. Thus the definition of $\widetilde{M}$ is without any ambiguity. Since $M$ is a maximal ideal of $ A_c(X)$ it follows that $\widetilde{M}$ is a prime ideal of $C_c(Y)$. Now it is already proved in (\cite{ref14}, Corollary 2.14) that every prime ideal in $C_c(Y)$ is contained in a unique maximal ideal. Thus $\widetilde{M}$ extends to a unique maximal ideal in$C_c(Y)$ which is fixed because $Y$ is compact. Thus there exists a unique point $y\in Y$ such that for each $g\in \widetilde{M}$, $g(y)=0$ and hence $\bigcap \limits_{g\in \widetilde{M}} Z(g)=\{y\}$. We set $f^\mathcal {M}(M)=y$. Thus \{$f^\mathcal {M}(M)$\}=$\bigcap \limits_{g\in \widetilde{M}} Z(g)$  ...(1).
	 We note that if $x\in X$ and $g\in \widetilde{M}_{A,x}$, then $g\circ f\in M_{A,x}$ and hence $(g\circ f)(x)=0$, which implies that $f(x)\in Z(g)$. This proves that $\bigcap \limits_{g\in \widetilde{M}_{A,x}} Z(g)=\{f(x)\}$. This implies in view of the definition (1) above that $f^\mathcal {M}(M_{A,x})=f(x)$, in other words: $f^\mathcal{M}\circ \Psi_A(x)=f(x)$. Thus $f^\mathcal{M}\circ \Psi_A=f$. To ensure the continuity of the map $f^\mathcal{M}:Max(A_c(X)\rightarrow Y$ defined in (1) at an arbitrary $M\in Max(A_c(X))$, let $W$ be a neighbourhood of $f^\mathcal{M}(M)$ in the space $Y$. Since $Y$ is zero-dimensional, each neighbourhood of a point $'y'$ in this space contains a co-zero set neighbourhood of $y$ of the form $Y\setminus Z(g_1)$
 for some $g_1\in C_c(Y)$ and also a zero set neighbourhood of $y$ of the form $Z(g_2)$ for an appropriate $g_2\in C_c(Y)$ (see Proposition 4.4, \cite{ref14}). Thus there exist $g_1,g_2\in C_c(Y)$ such that $f^\mathcal{M}\in Y\setminus Z(g_1)\subset Z(g_2)\subset W$ ...(2).
 As $f^\mathcal{M}(M)\notin Z(g_1)$, it follows from (1) that $g_1\notin \widetilde{M}$ which implies that $g_1\circ f\notin M$, in other words $M\notin (\mathcal{M}_A)_{g_1\circ f}$. Thus $Max (A_c(X))\setminus(\mathcal{M}_A)_{g_1\circ f}$ becomes an open neighbourhood of $M$ in the space $Max(A_c(X))$. We assert that $f^\mathcal{M}(Max (A_c(X))\setminus(\mathcal{M}_A)_{g_1\circ f}))$ $\subseteq W$ and this settles the continuity of  $f^\mathcal{M}$ at the point $M$.\\
\noindent Proof of the last assertion: Let $N\in (Max (A_c(X))\setminus (\mathcal{M}_A)_{g_1\circ f}$, then $g_1\circ f\notin N$, hence $g_1\notin \widetilde{N}$. Since $g_1g_2=0$ as is evident from the relation (2) above and $\widetilde{N}$ is a prime ideal in C(Y), it follows therefore that $g_2\in \widetilde{N}$. This implies in view of the relation (1) that $f^\mathcal{M}(N)\in Z(g_2)$ and hence from (2) we get that $f^\mathcal{M}(N)\in W$.
  
   The part three of the theorem follows from the simple observation that if $\beta X$ is zero-dimensional , then $\beta_0 X \geqq \beta X$ and consequently $\beta_0 X \approx \beta X$.
	
\end{proof}

\section{$P$-spaces $X$ versus the $m_c$-topology on $C_c(X)$}

\begin{Notation}
	For any $g\in C_c(X)$ and a positive unit $u$ of this ring set $M(g,u)=\{f\in C_c(X):|f(x)-g(x)|<u(x)\ for\ each\ x\in X\}$. Then it needs a routine calculation to conclude that $\mathcal{B}=\{M(g,u):g\in C_c(X)$, $u$ a positive unit of $ C_c(X)\}$ is an open base for some topology, which we call the $m_c$-topology on $C_c(X)$. It is also not at all hard to show by employing stereotyped routine arguments that $C_c(X)$ with $m_c$-topology is a topological ring as well as a topological vector space over $\mathbb{R}$. Let $U$ stand for the set of all units in $C_c(X)$. Then for each $u\in U$, it is easy to prove that $M(u,\frac{1}{2}|u|)\subseteq U$. It follows that $U$ is an open set in $C_c(X)$ in the $m_c$-topology. It is a standard result that in a topological ring the closure of an ideal is either an ideal or the whole of the ring (2M1, \cite{ref13}). This implies that if $I$ is a proper ideal of $C_c(X)$ then the closure of $I$ in the $m_c$-topology is also a proper ideal in $C_c(X)$. We therefore get the following result:
\end{Notation}
\begin{theorem} \label{t-3.2}
	Each maximal ideal in $C_c(X)$ is closed in the $m_c$-topology.
\end{theorem}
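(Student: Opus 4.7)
The proof should be essentially a one-liner given the groundwork laid in the preceding Notation block. The plan is to combine three facts that are already on the table: (i) $C_c(X)$ equipped with the $m_c$-topology is a topological ring, (ii) the set $U$ of units is open (because $M(u, \tfrac{1}{2}|u|) \subseteq U$), and (iii) in any topological ring the closure of a proper ideal is either a proper ideal or all of the ring, and the latter is ruled out precisely by the openness of $U$.

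Given a maximal ideal $M$ in $C_c(X)$, I would first invoke the general topological-ring fact quoted from 2M1 of \cite{ref13} to conclude that $\overline{M}$, the closure of $M$ in the $m_c$-topology, is either an ideal of $C_c(X)$ or equals the whole ring. If $\overline{M} = C_c(X)$, then in particular $1 \in \overline{M}$, so every neighbourhood of $1$ meets $M$. But $1$ is a unit and $U$ is open (as established in the Notation block, since $M(1,\tfrac{1}{2}) \subseteq U$), so there would be a unit belonging to $M$, contradicting the fact that $M$ is a proper ideal. Hence $\overline{M}$ is a proper ideal of $C_c(X)$ containing $M$.

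Finally, since $M$ is maximal and $M \subseteq \overline{M} \subsetneq C_c(X)$, we must have $\overline{M} = M$, which is exactly the statement that $M$ is closed in the $m_c$-topology.

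There is no real obstacle here; the only thing to be careful about is to make sure the two building blocks (topological ring structure and openness of $U$) are cited cleanly, since the Notation block already performs these verifications. The entire proof fits in two or three sentences.
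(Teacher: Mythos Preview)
Your proposal is correct and matches the paper's approach exactly: the paper derives Theorem~\ref{t-3.2} directly from the facts established in the preceding Notation block (topological ring, openness of the set of units, and the 2M1 fact from \cite{ref13}), concluding that the closure of any proper ideal is again a proper ideal. Your added final line, that maximality of $M$ forces $\overline{M}=M$, is the obvious step the paper leaves implicit.
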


	Before proceeding further in this technical section on $m_c$-topology on $C_c(X)$ we recall that the structure space of $C_c(X)$ is $\beta_0 X$. Hence the maximal ideals of $C_c(X)$ can be indexed by virtue of the points of $\beta_0X$. Indeed the complete list of maximal ideals in $C_c(X)$ is given in (\cite{ref5}, Theorem 4.2) by the family \{$M_c^p:p\in \beta_0X$\}, where $M_c^p=\{f \in C_c(X): p\in cl_{\beta_0X}Z(f)\}$. This is the $C$-analogue of the well known Gelfand-Kolmogoroff theorem (\cite{ref13}, Theorem 7.3).

\begin{Notation}
	
 For any ideal $I$ in $C_c(X)$ set $Q_c(I)=\{p\in\beta_0X:M_c^p\supseteq I\}$. Then the following result turns out as a simple consequence of the above formula for the maximal ideals $M_c^p$'s in $C_c(X)$.
	
\end{Notation}

	\begin{theorem}\label{t-3.4}
	$Q_c(I)=\bigcap_{f\in I}    cl_{\beta_0X}Z(f)$ , which is set of all cluster points of the $z_c$-ultrafilters $Z(I)$ in the space $\beta_0X$.
	
	\end{theorem}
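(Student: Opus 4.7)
The plan is to verify the equality by a direct definition chase, using only the Gelfand--Kolmogoroff-type description $M_c^p=\{f\in C_c(X):p\in cl_{\beta_0X}Z(f)\}$ recalled from (\cite{ref5}, Theorem 4.2), which is the one nontrivial input.

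First I would unwind the set-builder form of $Q_c(I)$. By definition, $p\in Q_c(I)$ exactly when $I\subseteq M_c^p$, which in turn means that for every $f\in I$ one has $f\in M_c^p$. Invoking the description of $M_c^p$, this is equivalent to the statement that for every $f\in I$, $p\in cl_{\beta_0X}Z(f)$. Quantifying over $f\in I$ gives $p\in\bigcap_{f\in I}cl_{\beta_0X}Z(f)$, and the reverse implications read off in the same way. This establishes the equality $Q_c(I)=\bigcap_{f\in I}cl_{\beta_0X}Z(f)$.

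Next I would identify this intersection with the set of cluster points of $Z[I]$ in $\beta_0X$. Recall that $Z[I]=\{Z(f):f\in I\}$ generates a $z_c$-filter on $X$ (and is itself a $z_c$-ultrafilter when $I$ is a maximal ideal, justifying the terminology in the statement). A point $p\in\beta_0X$ is by definition a cluster point of a filter $\mathcal{F}$ of subsets of $X$ precisely when $p\in cl_{\beta_0X}F$ for every $F\in\mathcal{F}$, so the cluster points of $Z[I]$ form the set $\bigcap_{f\in I}cl_{\beta_0X}Z(f)$. Because closure in $\beta_0X$ is monotone, enlarging the indexing set from $\{Z(f):f\in I\}$ to the full filter it generates does not change the intersection, so the two descriptions coincide.

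Since each implication in the argument is an equivalence that uses only the stated characterization of $M_c^p$ and elementary set-theoretic facts about closures and filters, there is really no substantial obstacle; the only thing to be careful about is making sure the terminology ``$z_c$-ultrafilter'' in the statement is consistent with the fact that $Z[I]$ is a $z_c$-filter in general, which one handles by observing that the cluster point description holds verbatim for any filter base and is indifferent to whether one works with $Z[I]$ or with the filter it generates.
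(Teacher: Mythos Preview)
Your argument is correct and is exactly the ``simple consequence'' the paper has in mind: the paper offers no proof beyond remarking that the result follows directly from the formula $M_c^p=\{f\in C_c(X):p\in cl_{\beta_0X}Z(f)\}$, and your definition chase spells this out. Your side remark that $Z[I]$ is in general only a $z_c$-filter (not necessarily a $z_c$-ultrafilter) is a fair observation about the paper's wording and does not affect the validity of the equality.
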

	
	We need to use the following three subsidiary results to prove the first important technical result in this section.
	\begin{theorem} \label{t-3.5}
		Let $f\in C_c(X)$ and $I$ be an ideal in $C_c(X)$ such that $cl_{\beta_0X}$$Z(f)$ is a neighbourhood of $Q_c(I)$ in $\beta_0X$. Then $f\in I$.
	\end{theorem}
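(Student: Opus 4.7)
The plan is to reduce first to a single generator of the ideal, then use zero-dimensionality of $\beta_0 X$ to obtain a clopen separator in $X$, and finally write $f$ explicitly as a multiple of that generator.

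First, fix an open set $U \subseteq \beta_0 X$ with $Q_c(I) \subseteq U \subseteq cl_{\beta_0 X} Z(f)$. By Theorem~\ref{t-3.4}, $Q_c(I) = \bigcap_{h \in I} cl_{\beta_0 X} Z(h)$, so the compact set $\beta_0 X \setminus U$ is disjoint from this intersection, and compactness of $\beta_0 X$ yields finitely many $g_1, \dots, g_n \in I$ with $\bigcap_{i=1}^{n} cl_{\beta_0 X} Z(g_i) \subseteq U$. Setting $g := g_1^2 + \cdots + g_n^2$, one has $g \in I$ and $Z(g) = \bigcap_{i=1}^{n} Z(g_i)$, hence $cl_{\beta_0 X} Z(g) \subseteq U \subseteq cl_{\beta_0 X} Z(f)$.

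Since $\beta_0 X$ is compact Hausdorff zero-dimensional, its clopen subsets form a base, and the compact set $cl_{\beta_0 X} Z(g)$ can be covered by finitely many basic clopens contained in $U$; their union $V$ is clopen in $\beta_0 X$ and satisfies $cl_{\beta_0 X} Z(g) \subseteq V \subseteq cl_{\beta_0 X} Z(f)$. Intersecting with $X$ and using the elementary identity $cl_{\beta_0 X} Z(\varphi) \cap X = Z(\varphi)$ for every $\varphi \in C_c(X)$ (which holds because $X$ is dense in $\beta_0 X$ and $Z(\varphi)$ is closed in $X$), the set $V_1 := V \cap X$ is clopen in $X$ with $Z(g) \subseteq V_1 \subseteq Z(f)$.

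Now define $h \colon X \to \mathbb{R}$ by $h(x) = 0$ for $x \in V_1$ and $h(x) = f(x)/g(x)$ for $x \in X \setminus V_1$. Since $g$ vanishes nowhere on $X \setminus V_1$, $h$ is continuous on each of the two clopen pieces and hence on $X$; its range is a subset of $\{0\} \cup \{f(x)/g(x) : x \in X \setminus V_1\}$, which is countable, so $h \in C_c(X)$. A direct check on each piece (noting $f = 0$ on $V_1 \subseteq Z(f)$) shows $f = g h$, and since $g \in I$, this forces $f \in I$.

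The main delicacy lies in the clopen separation step: without zero-dimensionality of $\beta_0 X$ one would at best produce a cozero-set neighbourhood of $cl_{\beta_0 X} Z(g)$, and then the quotient $f/g$ might fail to extend continuously across $Z(g)$. Zero-dimensionality is precisely what permits $X$ to be split into two clopen pieces on which $f$ and $g$ behave transparently, reducing the ideal membership of $f$ to an elementary factorisation.
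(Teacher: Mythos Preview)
Your argument is correct and follows essentially the same route as the paper: reduce via compactness of $\beta_0X$ to a single generator $g=g_1^2+\cdots+g_n^2\in I$ whose zero set lies inside a neighbourhood contained in $Z(f)$, and then show $f$ is a multiple of $g$. The only difference is cosmetic: the paper invokes Lemma~2.4 of \cite{ref14} for the last step, whereas you manufacture a clopen separator $V_1$ (via zero-dimensionality of $\beta_0X$) and write the factor $h=f/g$ down explicitly on the two clopen pieces---a self-contained substitute for that citation.
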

\begin{proof}
	The hypothesis tells that there exists an open subset $W$ of $\beta_0X$ such that $cl_{\beta_0X}Z(f)\supseteq W\supseteq Q_c(I)$. We can rewrite this relation in view of Theorem \ref{t-3.4} in the manner: $cl_{\beta_0X}Z(f)\supseteq W\supseteq \bigcap \limits_{f\in I}cl_{\beta_0X} Z(f)$ . This implies that $\beta_0X\setminus cl_{\beta_0X}Z(f)\subseteq\beta_0X\setminus W\subseteq\bigcup \limits_{f\in I}(\beta_0X\setminus cl_{\beta_0X}Z(f))$. Since the closed subset $\beta_0X\setminus W$ of $\beta_0X$ is compact, the last relation yields : $\beta_0X$$\setminus$$cl_{\beta_0X}Z(f)\subseteq\beta_0X\setminus W$$\subseteq\beta_0X\setminus\bigcap \limits_{i=1}^{n} cl_{\beta_0X}Z(f_i)$ for a suitable finite subset \{$f_1,f_2...f_n$\} of $I$. Consequently we have $cl_{\beta_0X}Z(f)\supseteq W$$\supseteq\bigcap \limits_{i=1}^{n} cl_{\beta_0X}Z(f_i)$, which further implies that $cl_{\beta_0X}Z(f)\cap X\supseteq W\cap X\supseteq Z(\sum\limits_{i=1}^{n} f_i^2)=Z(h) $ say, writing $h=f_1^2+f_2^2+...+f_n^2$. The last relation says that with $h\in I$, $Z(f)$ is a neighbourhood of $Z(h)$ in the space $X$. It follows from Lemma 2.4 in \cite{ref14} that $f$ is a multiple of $h$ in the ring $C_c(X)$. Since $h\in I$, we have $f\in I$.

\end{proof}

\begin{theorem}\label{t-3.6}
	Given $g\in C_c(X)$ and a positive unit $u$ in this ring, there exists $f\in C_c(X)$ such that $|g-f|\leq u$ and $cl_{\beta_0X}Z(f)$ is a neighbourhood of $cl_{\beta_0X}Z(g)$ in the space $\beta_0X$.
\end{theorem}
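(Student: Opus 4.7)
The plan is to construct $f$ by truncating $g$ to zero on a suitably chosen clopen subset $V$ of $\{x \in X : |g(x)| < u(x)\}$ that still contains $Z(g)$. Once $V$ is genuinely clopen in $X$, the resulting function $f$ will live in $C_c(X)$, its zero set will be exactly $V$, and the $C$-extension property of $\beta_0X$ will force $cl_{\beta_0X}V$ to be clopen in $\beta_0X$, hence an open neighbourhood of $cl_{\beta_0X}Z(g)$.

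To produce $V$, I would mimic the trick already exploited in Theorem~\ref{t-2.6}. Since $u$ is a positive unit, $1/u \in C_c(X)$, so $h := |g|/u$ lies in $C_c(X)$ with countable range; pick any $r \in (0,1)$ with $r \notin h(X)$. Then
\[
V = \{x \in X : |g(x)| \leq r u(x)\} = \{x \in X : |g(x)| < r u(x)\}
\]
is simultaneously closed and open in $X$, contains $Z(g)$, and is contained in $\{x : |g(x)| < u(x)\}$ because $r < 1$.

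Now define $f(x) = 0$ for $x \in V$ and $f(x) = g(x)$ for $x \in X \setminus V$. The two pieces are continuous on the clopen decomposition $X = V \cup (X \setminus V)$, so $f \in C(X)$; the range of $f$ lies in $\{0\} \cup g(X)$ and is therefore countable, so $f \in C_c(X)$. On $V$ we have $|g - f| = |g| \leq ru < u$, while on $X \setminus V$ the difference vanishes, so $|g - f| \leq u$ holds everywhere. Also $|g| > ru \geq 0$ on $X \setminus V$, whence $Z(f) = V$ exactly.

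Finally, the characteristic function $\chi_V : X \to \{0,1\}$ is continuous because $V$ is clopen, and $\{0,1\}$ is compact Hausdorff zero-dimensional. By the $C$-extension property of $\beta_0X$ it extends to a continuous $\chi_V^{\ast} : \beta_0X \to \{0,1\}$; the clopen preimage $(\chi_V^{\ast})^{-1}(1)$ meets $X$ in $V$ and, since $X$ is dense in $\beta_0X$, coincides with $cl_{\beta_0X}V = cl_{\beta_0X}Z(f)$. Thus $cl_{\beta_0X}Z(f)$ is a clopen subset of $\beta_0X$ containing $cl_{\beta_0X}Z(g)$, so it is an open neighbourhood of the latter, as required. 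The only delicate point in the whole argument is to arrange $V$ to be \emph{clopen} rather than merely open or merely closed: for an arbitrary $g \in C(X)$ this would fail, and it is precisely the countability of $h(X)$ --- the defining feature of $C_c(X)$ --- that lets us choose a level $r$ missing from the range, thereby forcing the sublevel set to be both open and closed.
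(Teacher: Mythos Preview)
Your proof is correct but follows a genuinely different route from the paper's. The paper uses the classical ``soft'' truncation
\[
f(x)=\begin{cases} 0 & |g(x)|\le u(x),\\ g(x)+u(x) & g(x)\le -u(x),\\ g(x)-u(x) & g(x)\ge u(x),\end{cases}
\]
and then introduces the auxiliary zero set $Z(h)=\{x:|g(x)|\ge u(x)\}$; the neighbourhood condition in $\beta_0X$ is obtained from the relations $Z(g)\cap Z(h)=\emptyset$ and $Z(f)\cup Z(h)=X$ together with Propositions~3.2 and~3.3 of \cite{ref5} (disjoint members of $Z_c(X)$ have disjoint closures in $\beta_0X$, and complementary zero sets have closures covering $\beta_0X$). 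By contrast, you exploit the countable range of $|g|/u$ (via the trick of Theorem~\ref{t-2.6}) to carve out a genuinely \emph{clopen} sublevel set $V$, define $f$ by a hard cut-off, and then appeal directly to the $C$-extension property of $\beta_0X$ to show that $cl_{\beta_0X}Z(f)=cl_{\beta_0X}V$ is clopen. Your argument is more self-contained (it avoids the external propositions from \cite{ref5}) and in fact yields the slightly stronger conclusion that $cl_{\beta_0X}Z(f)$ is a \emph{clopen} neighbourhood of $cl_{\beta_0X}Z(g)$; the paper's approach, on the other hand, mirrors the classical $C(X)$ argument and would survive even without the countable-range hypothesis on the individual function $h$.
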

\begin{proof}
	Let the map $f:X\rightarrow \mathbb{R}$ be defined as follows: $$f(x)=\begin{cases}
	0 & if \ |g(x)|\leq u(x)\\
	g(x)+u(x) & if \ g(x)\leq-u(x)\\
	g(x)-u(x) & if \ g(x)\geq u(x)
	\end{cases}$$
	
	It is clear that $f$ is a continuous function and of course $f\in C_c(X)$. It is easily seen that $|f-g|\leq u$ on $X$. Let $F=\{x\in X:|g(x)|\geq u(x)\}$. Then $F\in Z_c(X)$ so that we can write $F=Z_c(h)$ for some $h\in C_c(X)$. Hence $Z(g)\subseteq X\setminus Z(h)$$\subseteq Z(f)$. This implies that $Z(g)\cap Z(h)=\emptyset $ and  $Z(f)\cup Z(h)=X$. From this it follows that $cl_{\beta_0X}Z(g)\cap cl_{\beta_0X}Z(h)$=$\emptyset$ and $cl_{\beta_0X}Z(g)\cup cl_{\beta_0X}Z(f)=cl_{\beta oX}X=\beta_0X$ [see Proposition 3.2 and Proposition 3.3 in \cite{ref5}]. This further yields:\\
	$cl_{\beta_0X}Z(f)\supseteq \beta_0X\setminus cl_{\beta_0X} Z(h) \supseteq cl_{\beta_0X}Z(g)$
	this shows that $cl_{\beta_0X}Z(f)$ is a neighbourhood of $cl_{\beta_0X}Z(g)$ in the space $\beta_0X$.
\end{proof}

Define as in 7Q \cite{ref13}, for an ideal $I$ in $C_c(X)$.
\\
$\overline{I}=\bigcap \{M_c^p:M_c^p\supseteq I\}$= the intersection of all maximal ideals in $C_c(X)$ which contain $I$.

\begin{theorem} \label{t-3.7}
	For any ideal $I$ in $C_c(X)$ $\overline{I}$ is a closed ideal in the $m_c$-topology and $\overline{I}=\{g\in C_c(X): cl_{\beta_0X}Z(g)\supseteq Q_c(I)\}$.
\end{theorem}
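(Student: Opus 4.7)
The plan is to handle the two assertions of Theorem~\ref{t-3.7} separately, each by a short direct argument that unpacks the relevant definitions; neither appears to require the subsidiary results in Theorems~\ref{t-3.5} and \ref{t-3.6}, which I expect are really being stockpiled for the next theorem (the identification of the $m_c$-closure of an arbitrary ideal).

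First I would establish the displayed set equality. Starting from the definition $\overline{I}=\bigcap\{M_c^p : M_c^p\supseteq I\}$, a function $g\in C_c(X)$ lies in $\overline{I}$ precisely when $g\in M_c^p$ for every $p\in\beta_0 X$ satisfying $M_c^p\supseteq I$, that is, for every $p\in Q_c(I)$. By the Gelfand--Kolmogoroff description recalled just before Theorem~\ref{t-3.4}, the condition $g\in M_c^p$ is the same as $p\in cl_{\beta_0 X}Z(g)$. Chaining these two equivalences gives
\[
g\in\overline{I}\iff \forall\, p\in Q_c(I),\ p\in cl_{\beta_0 X}Z(g)\iff cl_{\beta_0 X}Z(g)\supseteq Q_c(I),
\]
which is exactly the asserted formula.

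Next I would verify that $\overline{I}$ is closed in the $m_c$-topology. By Theorem~\ref{t-3.2} each maximal ideal $M_c^p$ of $C_c(X)$ is $m_c$-closed, and since $\overline{I}$ is by its very definition the intersection of the family $\{M_c^p : p\in Q_c(I)\}$ of such maximal ideals, it is $m_c$-closed as an intersection of $m_c$-closed sets. That $\overline{I}$ is an ideal is immediate from being an intersection of ideals, and it is proper because any proper ideal $I$ is contained in at least one maximal ideal $M_c^p$ (so $Q_c(I)\neq\emptyset$ and $\overline{I}\subseteq M_c^p\subsetneq C_c(X)$).

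There is, frankly, no real obstacle in this particular proof: Theorem~\ref{t-3.7} is a bookkeeping statement that simply converts the Gelfand--Kolmogoroff labeling of maximal ideals into the zero-set description, and then invokes the already-proved Theorem~\ref{t-3.2}. The genuine technical content of the section resides in Theorems~\ref{t-3.5} and \ref{t-3.6}, which will be needed to prove the converse inclusion $\overline{I}\subseteq \mathrm{cl}_{m_c}(I)$ in the subsequent Theorem~\ref{t-3.8}; for that one, given any $g\in\overline{I}$ and any positive unit $u$, one uses Theorem~\ref{t-3.6} to approximate $g$ by an $f$ with $cl_{\beta_0X}Z(f)$ a neighborhood of $cl_{\beta_0 X}Z(g)\supseteq Q_c(I)$, and then Theorem~\ref{t-3.5} to place $f\in I$, thereby exhibiting a member of $I$ inside $M(g,u)$.
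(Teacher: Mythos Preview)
Your proof is correct and follows essentially the same approach as the paper: the set equality is obtained by unwinding the Gelfand--Kolmogoroff description of $M_c^p$ (the paper does this via two inclusions, you via a chain of equivalences), and closedness is deduced from Theorem~\ref{t-3.2} by noting that $\overline{I}$ is an intersection of maximal ideals. Your remark that Theorems~\ref{t-3.5} and \ref{t-3.6} are reserved for Theorem~\ref{t-3.8} is also accurate.
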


\begin{proof}
	It follows immediately from Theorem \ref{t-3.2} that $\overline{I}$ is a closed ideal in $C_c(X)$ in the $m_c$-topology.  Let $g$ $\in \overline{I}$, choose $x\in Q_c(I) $, then $M_c^x \supseteq I$. Consequently $g\in M_c^x$ and hence $x\in cl_{\beta_0X}Z(g)$. This implies that $Q_c(I) \subseteq cl_{\beta_0X}Z(g)$. To prove the reverse inclusion relation let $g\in C_c(X)$ be such that  $Q_c(I) \subseteq cl_{\beta_0X}Z(g)$. Let $M_c^p$ be any maximal ideal in $C_c(X)$ containing $I$, $p\in \beta_0X$. Then $p\in Q_c(I)$ consequently $p\in cl_{\beta_0X} Z(g) $ hence $g\in M_c^p$. Thus $g\in \overline{I}$.
\end{proof}
\begin{theorem}\label{t-3.8}
	For any ideal $I$ in $C_c(X)$, $\overline{I}$ is essentially the closure of $I$ in the $m_c$-topology.
\end{theorem}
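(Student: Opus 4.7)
The plan is to verify the two inclusions $cl_{m_c}(I) \subseteq \overline{I}$ and $\overline{I} \subseteq cl_{m_c}(I)$, and the whole argument is essentially an assembly of Theorems \ref{t-3.5}, \ref{t-3.6} and \ref{t-3.7}; no genuinely new construction is required.

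For the easy inclusion, I would simply observe that $I \subseteq \overline{I}$ by definition, and that $\overline{I}$ is closed in the $m_c$-topology by Theorem \ref{t-3.7} (being the intersection of maximal ideals, each of which is $m_c$-closed by Theorem \ref{t-3.2}). Hence $cl_{m_c}(I) \subseteq \overline{I}$.

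For the substantive inclusion, let $g \in \overline{I}$ and let $M(g,u)$ be an arbitrary basic $m_c$-neighbourhood of $g$, where $u$ is a positive unit of $C_c(X)$. Since $u$ is a positive unit, so is $u/2$ (its inverse in $C_c(X)$ being $2/u$). Apply Theorem \ref{t-3.6} to $g$ and $u/2$ to produce $f \in C_c(X)$ with $|g-f| \leq u/2$ on $X$ and such that $cl_{\beta_0 X} Z(f)$ is a neighbourhood of $cl_{\beta_0 X} Z(g)$ in $\beta_0 X$. Since $u>0$ pointwise, $|g-f| \leq u/2 < u$, so $f \in M(g,u)$.

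It remains to check that $f \in I$. By Theorem \ref{t-3.7}, the hypothesis $g \in \overline{I}$ gives $cl_{\beta_0 X} Z(g) \supseteq Q_c(I)$. The fact that $cl_{\beta_0 X} Z(f)$ is a neighbourhood of $cl_{\beta_0 X} Z(g)$ means there is an open set $W$ of $\beta_0 X$ with $cl_{\beta_0 X} Z(f) \supseteq W \supseteq cl_{\beta_0 X} Z(g) \supseteq Q_c(I)$, so $cl_{\beta_0 X} Z(f)$ is itself a neighbourhood of $Q_c(I)$. Theorem \ref{t-3.5} now yields $f \in I$, whence $f \in I \cap M(g,u)$. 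As $M(g,u)$ was arbitrary, $g \in cl_{m_c}(I)$, completing the proof. The only step requiring any care is noticing that one must invoke Theorem \ref{t-3.6} with the scaled unit $u/2$ (not $u$) so as to convert the non-strict inequality supplied by that theorem into the strict inequality demanded by membership in $M(g,u)$.
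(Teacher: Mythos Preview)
Your proof is correct and follows the same route as the paper: both inclusions are handled exactly as you describe, by invoking Theorems~\ref{t-3.7}, \ref{t-3.6} and \ref{t-3.5} in turn. The only difference is cosmetic: the paper applies Theorem~\ref{t-3.6} directly with $u$ and asserts that finding $h\in I$ with $|g-h|\le u$ for every positive unit $u$ suffices (which it does, since the positive units are closed under halving), whereas you make the passage to the strict inequality explicit by working with $u/2$ from the outset.
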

\begin{proof}
	It follows from the first part of Theorem \ref{t-3.7} that the closure of $I$ in the $m_c$-topology is contained in $\overline{I}$. To prove the reverse containment let $g\in \overline{I}$ and $u$ be a positive unit of $C_c(X)$. It suffices to produce an $h\in I$ such that $|g-h|\leq u$. Indeed from Theorem \ref{t-3.6} there exists an $h\in C_c(X)$ with $|g-h|\leq u$ such that $cl_{\beta_0X}Z(h)$ is a neighbourhood of  $cl_{\beta_0X}Z(g)$ in the space $\beta_0X$. But $g\in \overline{I}$ implies by Theorem \ref{t-3.7} that $cl_{\beta_0X} Z(g)\supseteq Q_c(I)$. Consequently $cl_{\beta_0X}Z(h)$ becomes a neighbourhood of $Q_c(I)$ in $\beta_0X$. Hence we get from Theorem \ref{t-3.5} that $h\in I$.
	\end{proof}
\begin{corollary}\label{c-3.9}
	An ideal in $C_c(X$) is closed in the $m_c$-topology if and only if it is the intersection of all the maximal ideals in $C_c(X)$ which contain it.
\end{corollary}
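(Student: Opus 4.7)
The plan is to derive the corollary directly from Theorem \ref{t-3.8} together with the definition of $\overline{I}$ introduced just before Theorem \ref{t-3.7}. Recall that for an ideal $I$ in $C_c(X)$ the symbol $\overline{I}$ was defined as $\bigcap\{M_c^p : M_c^p \supseteq I\}$, i.e.\ the intersection of all maximal ideals of $C_c(X)$ containing $I$. Theorem \ref{t-3.8} identifies this $\overline{I}$ with the $m_c$-closure of $I$ in $C_c(X)$. So the corollary is essentially a reformulation of that identification.

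For the forward direction, suppose $I$ is closed in the $m_c$-topology. Then $I$ coincides with its $m_c$-closure, and by Theorem \ref{t-3.8} this closure is exactly $\overline{I}$. Hence $I = \overline{I} = \bigcap\{M_c^p : M_c^p \supseteq I\}$, so $I$ is the intersection of all maximal ideals in $C_c(X)$ which contain it.

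For the converse direction, suppose $I = \bigcap\{M_c^p : M_c^p \supseteq I\}$, i.e.\ $I = \overline{I}$. By Theorem \ref{t-3.8} the right-hand side is the $m_c$-closure of $I$, so $I$ equals its own closure and is therefore closed in the $m_c$-topology.

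There is really no obstacle here beyond unpacking the notation: the whole content of Corollary \ref{c-3.9} is built into Theorem \ref{t-3.8}, and the proof only needs to observe that being closed is equivalent to being equal to the closure. I would just write the two one-line implications side by side, citing Theorem \ref{t-3.8} once and pointing back to the definition of $\overline{I}$ preceding Theorem \ref{t-3.7}.
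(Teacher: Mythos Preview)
Your proposal is correct and is exactly the intended derivation: the paper states Corollary~\ref{c-3.9} immediately after Theorem~\ref{t-3.8} without a separate proof, so the content is precisely the observation that ``$I$ is $m_c$-closed'' $\Leftrightarrow$ ``$I$ equals its $m_c$-closure'' $\Leftrightarrow$ ``$I=\overline{I}$''. There is nothing to add.
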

\begin{theorem}\label{t-3.10}
	A zero-dimensional space X is a $P$-space if and only if each ideal in $C_c(X)$ is closed in the $m_c$-topology.
\end{theorem}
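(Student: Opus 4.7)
The plan is to pivot on Corollary \ref{c-3.9}, which identifies the $m_c$-closed ideals of $C_c(X)$ with exactly those ideals that coincide with the intersection of the maximal ideals containing them, together with the fact cited from \cite{ref14} that a zero-dimensional $X$ is a $P$-space if and only if $C_c(X)$ is von-Neumann regular (the $CP$-space characterization).

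For the forward direction, I would assume $X$ is a $P$-space, so that $C_c(X)$ is von-Neumann regular. Let $I$ be an arbitrary (proper) ideal of $C_c(X)$. Since von-Neumann regularity passes to quotients, $C_c(X)/I$ is again commutative von-Neumann regular, and in any commutative von-Neumann regular ring every prime ideal is maximal, so the Jacobson radical equals the nilradical which is $\{0\}$. Thus $\{0\}$ in $C_c(X)/I$ is an intersection of maximal ideals, which pulls back to say that $I$ equals the intersection of all maximal ideals of $C_c(X)$ containing it. By Corollary \ref{c-3.9}, $I$ is $m_c$-closed.

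For the converse, assume every ideal in $C_c(X)$ is $m_c$-closed; the goal is to exhibit, for each $f\in C_c(X)$, some $g\in C_c(X)$ with $f=f^2g$, which yields von-Neumann regularity and hence (via the $CP$-space theorem from \cite{ref14}) that $X$ is a $P$-space. If $f$ is a unit the claim is immediate, so assume $(f^2)$ is a proper ideal. By hypothesis and Corollary \ref{c-3.9},
\[
(f^2)=\bigcap\{M_c^p:f^2\in M_c^p\}.
\]
Using the Gelfand--Kolmogoroff style description $M_c^p=\{h\in C_c(X):p\in cl_{\beta_0X}Z(h)\}$ recalled before Theorem \ref{t-3.4}, and the evident identity $Z(f^2)=Z(f)$, one has $f^2\in M_c^p$ if and only if $f\in M_c^p$. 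Consequently
\[
(f^2)=\bigcap\{M_c^p:f\in M_c^p\}\ni f,
\]
so $f\in(f^2)$ and the desired $g$ exists.

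The argument is essentially a packaging of Corollary \ref{c-3.9} with the known characterization of $P$-spaces via regularity of $C_c(X)$, so I do not anticipate a serious obstacle. The only point that requires care is the forward direction's appeal to the general ring-theoretic fact that in a commutative von-Neumann regular ring every ideal is an intersection of maximal ideals; this is standard but worth isolating in the write-up, and rests on the observation that a commutative von-Neumann regular ring is Jacobson semisimple and that regularity is preserved under quotients.
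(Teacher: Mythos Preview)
Your proof is correct and follows essentially the same approach as the paper: both pivot on Corollary~\ref{c-3.9} to translate ``every ideal is $m_c$-closed'' into ``every ideal equals the intersection of the maximal ideals containing it,'' and then invoke the $P$-space/$CP$-space characterizations from \cite{ref14}. The only difference is cosmetic: the paper simply cites Corollary~5.7 and Theorem~5.8 of \cite{ref14} for the latter equivalence, whereas you unpack it explicitly---using the standard fact that every ideal in a commutative von-Neumann regular ring is an intersection of maximal ideals for the forward direction, and a direct $f\in(f^2)$ argument via the Gelfand--Kolmogoroff description of the $M_c^p$ for the converse.
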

\begin{proof}
	It follows from Corollary \ref{c-3.9} that each ideal $I$ in $C_c(X)$ is closed in the $m_c$-topology if and only if each ideal in $C_c(X)$ is the intersection of all the maximal ideals in $C_c(X)$ containing it. In view of Corollary 5.7 and Theorem 5.8 in \cite{ref14}, the last condition is equivalent to the requirement that $X$ is a $P$-space.
\end{proof}
	Before examining the Von-Neumann regularity of the intermediate rings in the family $\Sigma_c(X)$, we need to further organize our machinery accordingly. A commutative ring $R$ with unity is called reduced if $0$ is the only nilpotent element of $R$. It is trivial that each $A_c(X)\in \Sigma_c(X)$ is a reduced ring. In what follows all the rings that will appear will be assumed to be reduced. An ideal $I$ (proper) in $R$ is called a $z^0$-ideal in $R$ if for each $a \in I$, $\mathcal{P}_a\subseteq I$, where $\mathcal{P}_a$ is the intersection of all minimal prime ideals in $R$ which contains a. We reproduce the following standard useful formula for the ${\mathcal{P}_a}$ from (\cite{ref7}, Proposition 1.5).

\begin{theorem}\label{t-3.11}
	For each $a\in R$,  $\mathcal{P}_a= \{b\in R:Ann(a)\subseteq Ann(b)\}$, where $Ann(a)=\{c\in R:ac=0\}$ is the annihilator of $a$ in $R$. We also reproduce the following standard proposition.
\end{theorem}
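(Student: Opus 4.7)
The plan is to prove the two inclusions separately, leveraging two standard facts about a reduced commutative ring $R$ with unity: \emph{(i)} the intersection of all minimal prime ideals of $R$ equals $\{0\}$, and \emph{(ii)} a prime ideal $P$ of $R$ is a minimal prime if and only if for every $x \in P$ there exists $y \notin P$ with $xy = 0$, equivalently $\mathrm{Ann}(x) \not\subseteq P$. Both facts are classical (fact (ii) being the key Krull-style characterization of minimal primes in the reduced setting), and together they control membership in $\mathcal{P}_a$ purely in terms of annihilators.

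For the inclusion $\mathcal{P}_a \subseteq \{b \in R : \mathrm{Ann}(a) \subseteq \mathrm{Ann}(b)\}$, I would start with $b \in \mathcal{P}_a$ and $c \in \mathrm{Ann}(a)$ and argue $bc = 0$. By fact (i), it suffices to show $bc$ lies in every minimal prime $P$ of $R$. Split into cases: if $a \in P$, then $b \in P$ by definition of $\mathcal{P}_a$, so $bc \in P$; if $a \notin P$, then $ac = 0 \in P$ together with primality forces $c \in P$, hence again $bc \in P$.

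For the reverse inclusion, suppose $\mathrm{Ann}(a) \subseteq \mathrm{Ann}(b)$ and let $P$ be any minimal prime containing $a$. By fact (ii) applied to $a \in P$, there exists $c \notin P$ with $ac = 0$, i.e., $c \in \mathrm{Ann}(a) \subseteq \mathrm{Ann}(b)$, so $bc = 0 \in P$. Primality of $P$ together with $c \notin P$ forces $b \in P$, which gives $b \in \mathcal{P}_a$.

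There is no real obstacle here beyond having fact (ii) at hand; once that characterization of minimal primes is in place, both inclusions reduce to a one-line case analysis using primality. Since the paper is merely reproducing this as a known lemma from reference [7], the write-up can afford to invoke fact (ii) without reproving it, and the whole argument fits comfortably in a short paragraph.
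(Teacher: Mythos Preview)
Your argument is correct and is essentially the standard proof of this fact. Note, however, that the paper does not supply its own proof of Theorem~\ref{t-3.11}: the result is simply quoted from \cite{ref7}, Proposition~1.5, and stated without demonstration, so there is no ``paper's proof'' to compare against. Your fact~(ii) is exactly the paper's Theorem~\ref{t-3.12} (Kist's characterization of minimal primes in a reduced ring), so your write-up would slot naturally into the paper's framework; the only cosmetic point is that you might mention the degenerate case where $a$ lies in no minimal prime (equivalently $a$ is a non-zero-divisor), in which both sides equal $R$ by the empty-intersection convention and $\mathrm{Ann}(a)=\{0\}$---but your argument already handles this implicitly.
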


\begin{theorem}\textnormal{(}Due to Kist, \cite{ref16}\textnormal{):}\label{t-3.12}
	A prime ideal $P$ in a ring $R$ is a minimal prime ideal if and only if for each $a\in P$ there exists $b\in R\setminus P$ such that $a.b$ is a nilpotent member of $R$ and in particular $a.b=0$ if the ring $R$ is assumed to to reduced.
\end{theorem}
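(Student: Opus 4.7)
The plan is to prove both directions by contradiction, with the nontrivial direction exploiting Zorn's lemma through a cleverly chosen multiplicatively closed subset of $R$.

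For sufficiency, I would suppose that $P$ is not minimal, so some prime $Q\subsetneq P$ exists. Pick $a\in P\setminus Q$; the hypothesis delivers $b\in R\setminus P$ with $ab$ nilpotent. Since every nilpotent element lies in every prime ideal, $ab\in Q$, and primeness of $Q$ together with $a\notin Q$ forces $b\in Q\subseteq P$, contradicting $b\notin P$.

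For necessity, fix $a\in P$ and set $S=\{ba^n:b\in R\setminus P,\ n\geq 0\}$. Since $R\setminus P$ is multiplicatively closed (as $P$ is prime) and so are the powers of $a$, the set $S$ itself is multiplicatively closed; it contains $R\setminus P$ (take $n=0$) as well as $a$ (take $b=1$, $n=1$). Suppose toward a contradiction that $0\notin S$; then the standard Zorn's lemma argument produces a prime ideal $Q$ maximal with respect to disjointness from $S$. Disjointness from $R\setminus P$ forces $Q\subseteq P$, while $a\in S$ gives $a\notin Q$, so $Q\subsetneq P$, contradicting minimality of $P$. Thus $0\in S$, and $ba^n=0$ for some $b\in R\setminus P$ and some $n\geq 1$ (the case $n=0$ would put $b=0$ in $P$). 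Commuting factors,
\[
(ab)^n \;=\; a^n b^n \;=\; (ba^n)\,b^{n-1} \;=\; 0,
\]
so $ab$ is nilpotent with $b\in R\setminus P$, as required. If $R$ is reduced then nilpotency collapses to $ab=0$, giving the stronger parenthetical claim.

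The main obstacle, I expect, is the necessity direction: one has to engineer the multiplicative set $S$ so that it simultaneously engulfs $a$ and sits above $R\setminus P$, and then convert the relation $ba^n=0$ into the desired conclusion by recognising $ab$ rather than $b$ itself as the correct nilpotent witness. The sufficiency half, by contrast, is almost purely formal once one remembers that the nilradical is contained in every prime.
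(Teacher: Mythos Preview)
Your argument is correct. Note, however, that the paper does not supply its own proof of this statement: Theorem~\ref{t-3.12} is simply quoted as a known result of Kist \cite{ref16} and used as a tool elsewhere in the article. There is therefore nothing in the paper to compare your proof against.

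That said, what you have written is essentially the standard proof of Kist's characterisation. The sufficiency half is exactly as one would expect. In the necessity half your choice of the multiplicative set $S=\{ba^n:b\in R\setminus P,\ n\ge 0\}$ is the classical one, and your final manoeuvre---passing from $ba^n=0$ to $(ab)^n=a^nb^n=(ba^n)b^{n-1}=0$---is the right way to extract a witness $b\in R\setminus P$ with $ab$ nilpotent (the more na\"{\i}ve candidate $c=ba^{n-1}$ fails precisely because $a^{n-1}\in P$ drags $c$ into $P$). The only tacit assumption you rely on is that $R$ has a unit, so that $1\in R\setminus P$ and $a^0=1$ make sense; this is the paper's standing hypothesis, so there is no gap.
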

\begin{remark} \label{r-3.13}
Each element of a minimal prime ideal in $R$ is a divisor of zero. Consequently each element of a $z^0$-ideal in $R$ is a divisor of zero.
\end{remark}

	The following fact is standard and a simple proof is offered in \cite{ref9}~Theorem 4.1.

\begin{theorem}\label{t-3.15}
	Each proper ideal in a Von-Neumann regular ring is a $z^0$-ideal.
\end{theorem}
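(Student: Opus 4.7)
The plan is to exploit the defining property of Von-Neumann regularity, namely that for every $a\in R$ there exists $x\in R$ with $a=axa$, together with the annihilator description of $\mathcal{P}_a$ provided by Theorem \ref{t-3.11}. Given a proper ideal $I$ in the Von-Neumann regular (reduced) ring $R$ and an element $a\in I$, I would first produce a convenient idempotent inside $(a)$: set $e=xa$ where $x$ satisfies $a=axa$. Then $e^2=xaxa=xa=e$ and $ea=xa\cdot a = a$ (using $axa=a$), so $e\in(a)\subseteq I$ and $(1-e)a=0$.

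Next I would bring in an arbitrary $b\in\mathcal{P}_a$. By Theorem \ref{t-3.11} this means $\mathrm{Ann}(a)\subseteq\mathrm{Ann}(b)$. Since $(1-e)a=0$, the element $1-e$ lies in $\mathrm{Ann}(a)$ and hence in $\mathrm{Ann}(b)$, giving $(1-e)b=0$, i.e.\ $b=eb$. Because $e\in I$ and $I$ is an ideal, we conclude $b=eb\in I$. This shows $\mathcal{P}_a\subseteq I$ for every $a\in I$, so $I$ is a $z^0$-ideal by definition.

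The only subtlety worth checking is that the ring is genuinely commutative in the argument above (so that $xa$ coincides with the idempotent we need and $Ann(a)=Ann(e)$ follows cleanly); since the ambient rings in this paper are commutative this is automatic. I do not foresee a main obstacle: the proof is essentially one short line once the idempotent $e$ is written down, which is why the result is attributed to \cite{ref9} as a standard fact. If anything, the only thing requiring a touch of care is ensuring the choice of $e$ satisfies both $e\in (a)$ and $(1-e)\in\mathrm{Ann}(a)$ simultaneously; writing $e=xa$ accomplishes both at once.
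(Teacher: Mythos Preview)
Your argument is correct: from $a=axa$ you extract the idempotent $e=xa\in(a)\subseteq I$ with $(1-e)a=0$, and then Theorem~\ref{t-3.11} forces $(1-e)b=0$ for every $b\in\mathcal{P}_a$, so $b=eb\in I$. The only cosmetic point is that your parenthetical ``$ea=xa\cdot a=a$'' relies on commutativity (so that $xa\cdot a=a^2x=a$), which you do flag at the end.

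As for comparison: the paper does not actually supply a proof of Theorem~\ref{t-3.15}; it merely records the statement as standard and defers to \cite{ref9}, Theorem~4.1. Your write-up is exactly the kind of short idempotent-plus-annihilator argument one expects for this fact, so there is nothing to contrast.
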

\begin{theorem}\label{t-3.16}
	An intermediate ring $A_c(X)\in \Sigma_c(X)$ is an absolutely convex subring of $C_c(X)$ in the following sence: If  $|f|\leq|g|$ with $g\in A_c(X)$ and $f\in C_c(X)$ then $f\in A_c(X)$. In particular $A_c(X)$ is a lattice ordered ring.
\end{theorem}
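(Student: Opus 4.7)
The plan is to establish absolute convexity by a factorization trick, then deduce that $A_c(X)$ is lattice ordered as an immediate corollary.

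First, given $|f|\le |g|$ with $g\in A_c(X)$ and $f\in C_c(X)$, I would introduce the auxiliary function $u=1+g^2$. Since $A_c(X)$ is a subring of $C_c(X)$ containing $1$, we have $u\in A_c(X)$. The point of $u$ is two-fold: it is a positive unit of $C_c(X)$ (because $u\ge 1$ everywhere), and it dominates $|g|$, in the sense that the elementary inequality $(|g|-1)^2\ge 0$ rearranges to $|g|\le u/2$.

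Next, set $h=f/u$. The function $1/u$ belongs to $C_c(X)$ because its range $\{1/(1+g(x)^2):x\in X\}$ is a countable subset of $\mathbb{R}$ (as $g(X)$ is), and hence $h=f\cdot(1/u)\in C_c(X)$. Moreover $|h|=|f|/u\le |g|/u\le 1/2$, so $h$ is bounded; that is, $h\in C_c^*(X)\subseteq A_c(X)$. Then $f=h\cdot u$ is a product of two elements of $A_c(X)$, whence $f\in A_c(X)$, which is the absolute convexity assertion.

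For the lattice-ordered conclusion, it suffices to prove $|f|\in A_c(X)$ whenever $f\in A_c(X)$, since the identities $f\vee g=(f+g+|f-g|)/2$ and $f\wedge g=(f+g-|f-g|)/2$ will then place both suprema and infima in $A_c(X)$. But $|f|\in C_c(X)$ and the trivial bound $\lvert|f|\rvert\le |f|$ with $f\in A_c(X)$ lets the absolute convexity just proved deliver $|f|\in A_c(X)$.

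The only bookkeeping worth watching is confirming that each auxiliary function retains countable range; this is straightforward, since the countable-range property is preserved under pointwise sums, products, and inversion of units in $C_c(X)$. I do not foresee any serious obstacle beyond this routine check.
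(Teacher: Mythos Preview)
Your proof is correct and follows essentially the same route as the paper: both factor $f=\dfrac{f}{1+g^{2}}\cdot(1+g^{2})$ and observe that the first factor is bounded (hence lies in $C_c^{*}(X)\subseteq A_c(X)$) while the second lies in $A_c(X)$. Your write-up is more detailed (the explicit bound $|g|\le(1+g^{2})/2$, the countable-range checks, and the derivation of the lattice structure), but the underlying idea is identical.
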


\begin{proof}
	since If $|f|\leq|g|$ it follows that $\frac{f}{1+g^2}$ is a bounded function in $C_c(X)$. Thus $f=\frac{f}{1+g^2}.(1+g^2)\in A_c(X)$.
\end{proof}

The following result tells that no intermediate ring in the family $\Sigma_c(X)\setminus \{C_c(X)\}$ can be ever Von-Neumann regular.
\begin{theorem}\label{t-3.17}
	Suppose $A_c(X)\in \Sigma_c(X) $ is Von-Neumann regular , then $A_c(X)=C_c(X)$.
\end{theorem}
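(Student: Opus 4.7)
I would argue by contrapositive: assume $A_c(X) \subsetneq C_c(X)$ and show $A_c(X)$ fails to be Von-Neumann regular. Since $C_c^*(X) \subseteq A_c(X)$, any $f \in C_c(X) \setminus A_c(X)$ must be unbounded on $X$. Fix such an $f$. The idea is to exhibit a concrete element of $A_c(X)$ that is a non-zero-divisor but not a unit; Von-Neumann regularity forces every non-zero-divisor to be a unit (if $a^{2}b = a$ and $a$ is a non-zero-divisor, then $a(ab-1)=0$ gives $ab = 1$), so producing such an element closes the argument.

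The candidate is $g = \dfrac{1}{1+f^{2}}$. Note that $g \in C_c^{*}(X) \subseteq A_c(X)$ and $Z(g) = \emptyset$, so $g$ is never zero on $X$. Hence in $A_c(X)$, if $gh = 0$ for some $h \in A_c(X)$, pointwise evaluation forces $h \equiv 0$; that is, $g$ is a non-zero-divisor. If $A_c(X)$ were Von-Neumann regular, the observation above would give $h \in A_c(X)$ with $gh = 1$, i.e.\ $h = 1 + f^{2} \in A_c(X)$.

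I would then close the loop using the absolute convexity of $A_c(X)$ (Theorem \ref{t-3.16}). Since $|f| \leq 1 + f^{2}$ pointwise and $1 + f^{2} \in A_c(X)$, absolute convexity forces $f \in A_c(X)$, contradicting the choice of $f$. Therefore $A_c(X)$ cannot be Von-Neumann regular once it is a proper subring of $C_c(X)$, and the theorem follows.

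The only delicate point is the passage from $g^{2}h = g$ to $gh = 1$, which depends on $g$ having empty zero set; this is why $g = 1/(1+f^{2})$ is the correct choice rather than, say, a generic bounded representative of $f$. Everything else is a direct appeal to the already established machinery: the containment $C_c^{*}(X) \subseteq A_c(X)$ puts $g$ in the ring, absolute convexity drags the hypothetical inverse back down to $f$, and the inequality $(|f| - 1)^{2} \geq 0$ supplies $|f| \leq 1 + f^{2}$ for free.
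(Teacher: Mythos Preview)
Your proof is correct and follows essentially the same strategy as the paper's: exhibit a bounded nowhere-vanishing element (you use $\frac{1}{1+f^{2}}$, the paper uses $\frac{1}{1+|f|}$), argue it must be a unit under Von-Neumann regularity, and then pull $f$ back into $A_c(X)$ via absolute convexity (Theorem~\ref{t-3.16}). The only difference is that where the paper invokes Theorem~\ref{t-3.15} and Remark~\ref{r-3.13} to conclude that a non-unit in a Von-Neumann regular ring is a zero divisor, you obtain the contrapositive directly from the defining relation $g^{2}b=g$; this is a minor streamlining rather than a different route.
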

\begin{proof}
	Choose $f\in C_c(X)$. We shall show that $f\in A_c(X)$. Because of the absolute convexity of $A_c(X)$ in $C_c(X)$ in the last theorem it suffices to show that $|f|\in A_c(X)$. We shall indeed show that $\frac{1}{1+|f|}$ is a multiplicative unit of the ring $A_c(X)$ and that will do. Suppose towards a contradiction and let $\frac{1}{1+|f|}$ be not a multiplicative unit of $A_c(X)$. It is clear because of the boundedness of the function $\frac{1}{1+|f|}$ over $X$ that $\frac{1}{1+|f|}\in A_c(X) $. Therefore the principle ideal $<\frac {1}{1+|f|}>=I$ in $A_c(X)$
 generated by this function is a proper ideal and is hence by Theorem \ref{t-3.15} a $z^0$-ideal in $A_c(X)$. It follows from Remark \ref{r-3.13} that $\frac{1}{1+|f|}$ is a divisor of zero in $A_c(X)$ -a contradiction.
\end{proof}
 
 Since a zero-dimensional space $X$ is a $P$-space if and only if $C_c(X)$ is Von-Neumann regular (Corollary 5.7, \cite{ref14}), the following proposition is immediate from the above theorem.
\begin{theorem}
	Let $X$ be a $P$-space . Then $A_c(X)\in \Sigma_c(X)$ is Von-Neumann reular if and only if $A_c(X)=C_c(X)$.
\end{theorem}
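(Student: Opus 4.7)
The statement is a straightforward corollary of Theorem \ref{t-3.17} together with the cited characterization of $P$-spaces via Von-Neumann regularity of $C_c(X)$, so my plan is simply to assemble these two ingredients and observe that the $P$-space hypothesis is used only in one of the two implications.

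For the forward direction, suppose $A_c(X) \in \Sigma_c(X)$ is Von-Neumann regular. This is precisely the hypothesis of Theorem \ref{t-3.17}, whose conclusion gives $A_c(X) = C_c(X)$. I would emphasize that this implication does not actually require $X$ to be a $P$-space; the $P$-space hypothesis will only be invoked in the reverse direction.

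For the reverse direction, suppose $A_c(X) = C_c(X)$. Since $X$ is assumed to be a zero-dimensional $P$-space, Corollary 5.7 of \cite{ref14} (the characterization $X$ is a $P$-space iff $C_c(X)$ is Von-Neumann regular, cited just above the statement we must prove) immediately yields that $C_c(X)$ is Von-Neumann regular. Therefore $A_c(X) = C_c(X)$ is Von-Neumann regular.

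There is really no obstacle here: the content of the statement is to isolate $C_c(X)$ as the unique Von-Neumann regular intermediate ring in $\Sigma_c(X)$ when $X$ is a $P$-space. The nontrivial work has already been carried out in Theorem \ref{t-3.17} (via absolute convexity, Theorem \ref{t-3.15}, and Remark \ref{r-3.13}) and in the quoted \cite{ref14}. My write-up would therefore be two short sentences, one per direction, citing Theorem \ref{t-3.17} and Corollary 5.7 of \cite{ref14} respectively.
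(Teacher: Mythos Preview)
Your proposal is correct and matches the paper's own justification exactly: the paper states that the result is ``immediate from the above theorem'' (Theorem~\ref{t-3.17}) together with Corollary~5.7 of \cite{ref14}, which is precisely the pair of ingredients you invoke for the two directions. Your additional observation that the $P$-space hypothesis is needed only for the reverse implication is accurate and worth keeping.
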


\section{Almost $P$-spaces $X$ vis-a-vis the $z^0$-ideals in $A_c(X)$. }
Since the $z^0$-ideals in $A_c(X)$ are all divisors of zero, the following formula to determine them will be needed from time to time.
\begin{theorem}\label{t-4.1}
	An $f\in A_c(X)$ is a divisor of zero in this ring if and only if $Int_X Z(f) \neq\emptyset$.
\end{theorem}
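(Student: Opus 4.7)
The plan is to prove both directions by a direct construction, exploiting zero-dimensionality of $X$ to produce an explicit witness for the reverse direction.

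For the forward direction, assume $f$ is a zero-divisor in $A_c(X)$, so there is some nonzero $g \in A_c(X)$ with $fg = 0$. Pick $x_0 \in X$ with $g(x_0) \neq 0$. By continuity of $g$, there is an open neighborhood $U$ of $x_0$ on which $g$ does not vanish. Since $fg \equiv 0$, we must have $f \equiv 0$ on $U$, i.e.\ $U \subseteq Z(f)$. Hence $x_0 \in \mathrm{Int}_X Z(f)$, which is therefore non-empty.

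For the reverse direction, assume $\mathrm{Int}_X Z(f) \neq \emptyset$ and pick a point $x_0$ in this interior. Because $X$ is zero-dimensional, the clopen sets form a base for the topology on $X$, so we can choose a non-empty clopen set $V$ with $x_0 \in V \subseteq \mathrm{Int}_X Z(f) \subseteq Z(f)$. Let $g = \chi_V$, the characteristic function of $V$. Since $V$ is clopen, $g$ is continuous; moreover $g$ is bounded and its range $\{0,1\}$ is countable, so $g \in C_c^*(X) \subseteq A_c(X)$. Evidently $g \neq 0$, and since $V \subseteq Z(f)$ we have $fg \equiv 0$ on $X$. Therefore $f$ is a zero-divisor in $A_c(X)$.

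Neither direction should present any real obstacle. The only point worth underlining is that the argument in the reverse direction depends essentially on two background facts that have already been set up: zero-dimensionality of $X$ (to produce the clopen set $V$ inside $\mathrm{Int}_X Z(f)$), and the containment $C_c^*(X) \subseteq A_c(X)$ (to guarantee that the witness $\chi_V$ actually lies in the intermediate ring under consideration, rather than merely in $C(X)$).
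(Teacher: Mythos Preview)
Your proof is correct and follows essentially the same approach as the paper. The only cosmetic difference is in the reverse direction: the paper invokes the separation property of $C_c(X)$ (Proposition~4.4 of \cite{ref14}) to obtain a function $g\in C_c(X)$ with range in $[0,1]$, $g(p)=1$, and $g(X\setminus \mathrm{Int}_X Z(f))=0$, whereas you construct the witness directly as the characteristic function of a clopen set---both yield a nonzero element of $C_c^*(X)\subseteq A_c(X)$ annihilating $f$.
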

\begin{proof}
	Suppose $f\in A_c(X)$ is a divisor of zero. Then $f\neq0$ and there exists $g\neq 0$ in $A_c(X)$ such that $fg=0$. This shows that $Z(f)\cup Z(g)=X$ and hence $X-Z(g)\subseteq Z(f)$. As $X\setminus Z(g)$ is a non-empty open set in $X$, it follows that $Int_X Z(f)\neq \emptyset$.\\
	
	Conversely let  $Int_X Z(f)\neq \emptyset$. Choose $p$ from this nonempty set. Since $X$ is  zero-dimensional, functions in $C_c(X)$ with their range contained in $[0,1]$ can separate points and closed sets in $X$ (Proposition 4.4, \cite{ref14}). Therefore there exists $g\in C_c(X)$ such that $g(p)=1$ and $g(X\setminus Int_X Z(f))=0$. It is clear that $f.g=0$ and $g\neq0$. Thus $f$ is divisor of zero in $A_c(X)$.
\end{proof}

The next proposition will also be useful to us:
\begin{theorem}\label{t-4.2}
	Let $X$ be zero-dimensional and $f,g\in A_c(X)$. Then $Int_XZ(f)\subseteq Int_XZ(g)$ if and only if $Ann(f)\subseteq Ann(g)$ in the ring $A_c(X)$.
\end{theorem}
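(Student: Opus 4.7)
The plan is to work directly from the definition $k\in Ann(h)\iff hk=0\iff Z(h)\cup Z(k)=X$, coupled with the basic observation that the cozero set $X\setminus Z(k)$ is always open, so the inclusion $X\setminus Z(k)\subseteq Z(f)$ can always be upgraded to $X\setminus Z(k)\subseteq Int_XZ(f)$. This observation does most of the work in the forward direction, and the reverse direction is handled via the characteristic function of a clopen neighbourhood, which is available because $X$ is zero-dimensional.

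For the forward implication, assume $Int_XZ(f)\subseteq Int_XZ(g)$ and take $k\in Ann(f)$. Then $fk=0$ gives $Z(f)\cup Z(k)=X$, i.e.\ $X\setminus Z(k)\subseteq Z(f)$. Since $X\setminus Z(k)$ is open, this refines to $X\setminus Z(k)\subseteq Int_XZ(f)\subseteq Int_XZ(g)\subseteq Z(g)$, whence $Z(g)\cup Z(k)=X$ and so $gk=0$, i.e.\ $k\in Ann(g)$.

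For the reverse implication, assume $Ann(f)\subseteq Ann(g)$ and pick any $p\in Int_XZ(f)$. Using zero-dimensionality of $X$, choose a clopen neighbourhood $U$ of $p$ with $U\subseteq Int_XZ(f)\subseteq Z(f)$. The characteristic function $\chi_U$ is continuous (as $U$ is clopen), bounded, and has range $\{0,1\}$, hence $\chi_U\in C_c^*(X)\subseteq A_c(X)$. By construction $f\cdot\chi_U=0$, so $\chi_U\in Ann(f)\subseteq Ann(g)$, giving $g\cdot\chi_U=0$. This forces $g$ to vanish on $U$, so $U\subseteq Z(g)$, and since $U$ is an open set containing $p$, we conclude $p\in Int_XZ(g)$.

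No serious obstacle is anticipated. The only point to be careful about is ensuring that the test function chosen in the reverse direction actually lies in the intermediate ring $A_c(X)$, not merely in $C_c(X)$; this is why a clopen neighbourhood is used rather than a general Urysohn-type separating function as in Theorem \ref{t-4.1}, since $\chi_U$ is automatically bounded and thus belongs to $C_c^*(X)\subseteq A_c(X)$.
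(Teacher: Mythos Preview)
Your proof is correct and follows essentially the same approach as the paper. The forward direction is identical; in the converse direction the paper argues by contradiction using a separating function $h\in C_c^*(X)$ with $h(p)=1$ and $h(X\setminus Int_XZ(f))=0$, while you argue directly using the characteristic function of a clopen neighbourhood---but this is only a cosmetic difference, and your care in noting that $\chi_U\in C_c^*(X)\subseteq A_c(X)$ matches exactly the paper's own concern.
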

\begin{proof}
	Let  $Int_XZ(f)\subseteq Int_XZ(g)$. Choose $h\in Ann(f)$, then $hg=0$. This implies that $X\setminus Z(h)\subseteq Z(f)$, which further implies that $X\setminus Z(h)\subseteq Int_X Z(f)\subseteq Int_X Z(g)\subseteq Z(g)$. Hence $g.h=0$ i.e., $h\in Ann(g)$. Thus $Ann(f)\subseteq Ann(g)$.
	
	Conversely let $Ann(f)\subseteq Ann(g)$. It is sufficient to check that $Int_X Z(f)\subseteq Z(g)$. If possible let there exist a point $p\in Int_XZ(f)\setminus Z(g)$. Since $X$ is zero-dimensional, there exists an $h\in C_c^*(X)\subseteq A_c(X)$ such that $h(p)=1$ and $h(X\setminus Int_XZ(f))=0$. It follows that $h.f=0$ i.e., $h\in Ann(f)$ but $h(p)g(p)\neq0$. So that $h.g\neq0$ and hence $h\notin Ann(g)$. This is a contradiction.
\end{proof}  
 A combination of Theorem \ref{t-3.10} and Theorem \ref{t-4.2} yields the following result:
 \begin{theorem}\label{t-4.3}
 	 	For any $f\in A_c(X)$ $\mathcal{P}_f=\{g\in A_c(X):Ann(f)\subseteq Ann(g)\}=\{g\in A_c(X):Int_X Z(f)\subseteq Int_XZ(g)\}$.
 \end{theorem}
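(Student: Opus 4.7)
The plan is to obtain the result as an immediate concatenation of two results already in place, with essentially no further work required. The statement asserts a chain of two equalities, and each link in the chain is the direct content of a previously proved proposition.

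For the first equality, I would invoke Theorem~\ref{t-3.11} (Kist's formula for $\mathcal{P}_a$ in a reduced commutative ring with unity). Since every intermediate ring $A_c(X) \in \Sigma_c(X)$ is reduced (as already noted in the paragraph preceding Theorem~\ref{t-3.11}) and contains $1$, Theorem~\ref{t-3.11} applies verbatim with $R = A_c(X)$ and $a = f$, yielding $\mathcal{P}_f = \{g \in A_c(X) : Ann(f) \subseteq Ann(g)\}$. No further argument is needed here.

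For the second equality, I would simply quote Theorem~\ref{t-4.2}, which establishes the equivalence $Ann(f) \subseteq Ann(g) \Longleftrightarrow Int_X Z(f) \subseteq Int_X Z(g)$ for every pair $f,g \in A_c(X)$ in a zero-dimensional space $X$. Interpreted as an equality of sets indexed by $g \in A_c(X)$ (with $f$ fixed), this gives exactly $\{g : Ann(f) \subseteq Ann(g)\} = \{g : Int_X Z(f) \subseteq Int_X Z(g)\}$, completing the chain.

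There is no genuine obstacle in this proof: the work has already been done in Theorem~\ref{t-3.11} and Theorem~\ref{t-4.2}, and the present statement is a convenient repackaging that will let subsequent arguments identify $\mathcal{P}_f$ directly in terms of the topological data $Int_X Z(f)$, which is easier to manipulate than annihilators. The only mild point to flag, if one wishes to be meticulous, is that Theorem~\ref{t-3.11} is stated for an abstract reduced ring $R$, so one should briefly remind the reader why $A_c(X)$ qualifies (it is a subring of $C_c(X)$ containing $1$, hence reduced, since $C_c(X) \subseteq C(X)$ has no nonzero nilpotents).
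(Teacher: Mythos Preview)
Your proposal is correct and matches the paper's own approach: the paper simply states that the result follows from a combination of Theorem~\ref{t-3.11} (the annihilator description of $\mathcal{P}_a$ in a reduced ring, which the paper's text mistakenly labels as Theorem~\ref{t-3.10}) and Theorem~\ref{t-4.2}. Your write-up is, if anything, slightly more careful in noting why $A_c(X)$ satisfies the hypotheses of Theorem~\ref{t-3.11}.
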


We recall that $\mathcal{P}_f$ is the intersection of all the minimal prime ideals in $A_c(X)$ which contain $f$.
Before taking up the problem of characterizing almost $P$-spaces $X$ via $z^0$-ideals in $C_c(X)$, we need to recall the notion of $z$-ideal in an arbitrary commutative ring $R$ with unity.
\begin{definition}
	An ideal $I$ in $R$ is called a $z$-ideal in $R$ if for each $a\in I$, $M_a\subseteq I$, here $M_a$ is the intersection of all maximal ideals in $R$ containing $a$. Evidently each maximal ideal in $R$ is a $z$-ideal. This notion of $z$-ideal is consistent with the notion of $z$-ideals in $C(X)$. (See 4A, \cite{ref13})
	\\
	
	The following result identifies $z$-ideals and $z_c$-ideals in $C_c(X)$. An ideal $I$ in $C_c(X)$ is called a $z_c$-ideal in \cite{ref14} if whenever $Z(f)\in Z_c(I)=\{Z(g):g \in I\}, f\in C_c(X)$, then $f\in I$.
\end{definition}
\begin{theorem}\label{t-4.5}
	Let $X$ be zero-dimensional . Then an ideal $I$ in $C_c(X)$ is a $z$-ideal if and only if it is a $z_c$-ideal.
\end{theorem}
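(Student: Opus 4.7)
The plan is to reduce both implications to a single structural observation: for $f,g \in C_c(X)$, membership in the ideal $M_g$ (intersection of maximal ideals containing $g$) is controlled entirely by the zero sets of $f$ and $g$. Once this is in hand, the two directions are short.

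First I would establish the key lemma: for $f,g \in C_c(X)$, one has $f \in M_g$ if and only if $Z(g) \subseteq Z(f)$. The tool is the explicit description of maximal ideals in $C_c(X)$ recalled after Theorem \ref{t-3.2}, namely that the maximal ideals are precisely $M_c^p = \{h \in C_c(X) : p \in cl_{\beta_0 X} Z(h)\}$ for $p \in \beta_0 X$. From this, $f \in M_g$ is equivalent to the implication $g \in M_c^p \Rightarrow f \in M_c^p$ holding for every $p \in \beta_0 X$, i.e.\ $cl_{\beta_0 X} Z(g) \subseteq cl_{\beta_0 X} Z(f)$. Intersecting with $X$ (and using that $X$ sits in $\beta_0 X$ as a subspace, so that $cl_{\beta_0 X} Z(h) \cap X = Z(h)$ for any $h \in C_c(X)$) this becomes $Z(g) \subseteq Z(f)$.

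For the direction $(\Rightarrow)$, assume $I$ is a $z$-ideal. If $f \in C_c(X)$ with $Z(f) \in Z_c[I]$, choose $g \in I$ with $Z(g) = Z(f)$. By the lemma $f \in M_g$, and since $I$ is a $z$-ideal and $g \in I$ we get $M_g \subseteq I$, so $f \in I$. Thus $I$ is a $z_c$-ideal.

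For $(\Leftarrow)$, assume $I$ is a $z_c$-ideal. Let $g \in I$ and $f \in M_g$; the lemma gives $Z(g) \subseteq Z(f)$, hence $Z(fg) = Z(f) \cup Z(g) = Z(f)$. Since $fg \in I$, this means $Z(f) \in Z_c[I]$, and the $z_c$-ideal hypothesis forces $f \in I$; thus $M_g \subseteq I$ and $I$ is a $z$-ideal. The only nontrivial step is the lemma, and its main subtlety is the passage from $cl_{\beta_0 X} Z(g) \subseteq cl_{\beta_0 X} Z(f)$ back to $Z(g) \subseteq Z(f)$; this is where zero-dimensionality of $X$ (and hence $X \hookrightarrow \beta_0 X$ being a topological embedding with $Z(h)$ closed in $X$) is used, and it is the one place one must be careful.
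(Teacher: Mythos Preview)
Your argument is correct and follows essentially the same route as the paper: both proofs rest on the Gelfand--Kolmogoroff description $M_c^p=\{h:p\in cl_{\beta_0X}Z(h)\}$ and the observation that, for $f,g\in C_c(X)$, the condition ``every maximal ideal containing $g$ contains $f$'' is equivalent to $cl_{\beta_0X}Z(g)\subseteq cl_{\beta_0X}Z(f)$, which in turn is equivalent to $Z(g)\subseteq Z(f)$. The only differences are organizational: you isolate this equivalence as a standalone lemma and apply it symmetrically, whereas the paper derives the needed half inline in each direction; and in the $z_c\Rightarrow z$ direction you make explicit the step $Z(f)=Z(fg)\in Z_c[I]$, which the paper leaves implicit when it passes directly from $Z(f)\subseteq Z(g)$ with $f\in I$ to $g\in I$. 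One minor remark: the intersection step $cl_{\beta_0X}Z(h)\cap X=Z(h)$ is just the general fact that closed subsets of a subspace are recovered from their closures in the ambient space, so zero-dimensionality enters only to guarantee that $\beta_0X$ and the maximal-ideal description exist, not for this particular computation.
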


	\begin{proof}
		Let $I$ be a $z_c$-ideal in $C_c(X)$. Let $f\in I$ and $g\in M_f$, this means that if for $p\in \beta_0X $, $f\in M_c^p$ then $g\in M_c^p$. This implies that $cl_{\beta_0X} Z(f)$ $\subseteq$ $cl_{\beta_0X} Z(g)$, which further implies on taking intersection with $X$ that $Z(f)\subseteq Z(g)$. Since $f\in I$ and $I$ is a $z_c$-ideal in $C_c(X)$ it follows that $g \in I$. Thus $M_f\subseteq I$ and hence $I$ is a $z$-ideal in $C_c(X)$.
		
		Conversely let $I$ be a $z$-ideal in $C_c(X)$, $f\in I$ and $Z(f)\subseteq Z(g) $ with $g\in C_c(X)$. We have to show that $g\in I$. Since $I$ is a $z$-ideal in $C_c(X)$ it suffices to show that $g\in M_f$. So let $M_c^p$ be any maximal ideal in $C_c(X)$ it suffices to show that $g\in M_f$. So let $M_c^p$ be any maximal ideal in $C_c(X)$, $p\in \beta_0X$ which contains $f$, we have to show that $g\in M_c^p$. Indeed $f\in M_c^p$ implies that $p\in cl_{\beta_0 X} Z(f)$ which further implies that $p\in cl_{\beta_0 X} Z(g)$ hence $g\in M_c^p$. Thus altogether $I$ becomes a $z_c$-ideal in $C_c(X)$.
	\end{proof}
We next establish the countable analogue of the well-known fact 3.11(b) in \cite{ref13}.
\begin{theorem}\label{t-4.6}
	Let $K$ be a compact set contained in a $G_\delta$-set $G$ in a zero-dimensional Hausdorff space $X$. Then there exists $Z\in Z_c(X)$ such that $K\subseteq Z\subseteq G$.
\end{theorem}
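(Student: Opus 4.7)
The plan is to use the zero-dimensionality to replace each open approximating set of $G$ by a clopen set containing $K$, and then build a function with countable range that cuts out the intersection of these clopen sets. Countable range (rather than merely continuity) is the whole reason the classical construction from $3.11(b)$ in \cite{ref13} does not transfer verbatim, and this is the step that requires care.

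First I would write $G=\bigcap_{n=1}^{\infty}G_{n}$ with each $G_{n}$ open. Since $X$ is zero-dimensional, the clopen sets form a base, so each $G_{n}$ is a union of clopen subsets of $X$. Using compactness of $K$ and the fact that $K\subseteq G_{n}$, I can cover $K$ by finitely many such clopen subsets of $G_{n}$ and take their union; this produces a clopen set $U_{n}$ with $K\subseteq U_{n}\subseteq G_{n}$. Replacing $U_{n}$ by $V_{n}=U_{1}\cap U_{2}\cap\dots\cap U_{n}$ I obtain a decreasing sequence of clopen sets $V_{1}\supseteq V_{2}\supseteq\dots$ with $K\subseteq V_{n}\subseteq G_{n}$ for every $n$, and hence $K\subseteq\bigcap_{n}V_{n}\subseteq G$.

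Next I would manufacture the desired zero set. Setting $V_{0}=X$, define $f:X\to\mathbb{R}$ by $f(x)=\tfrac{1}{n}$ when $x\in V_{n-1}\setminus V_{n}$ for some $n\geq 1$, and $f(x)=0$ when $x\in\bigcap_{n}V_{n}$. The range of $f$ lies in $\{0\}\cup\{1/n:n\geq 1\}$, so $f$ automatically has countable range; the content is continuity. On every piece $V_{n-1}\setminus V_{n}$ the function is locally constant because $V_{n-1}\setminus V_{n}$ is clopen (being the intersection of the two clopen sets $V_{n-1}$ and $X\setminus V_{n}$). At a point $x\in\bigcap_{n}V_{n}$, for $\varepsilon>0$ pick $N$ with $1/N<\varepsilon$; the clopen set $V_{N}$ is a neighborhood of $x$, and for $y\in V_{N}$ either $y\in\bigcap_{n}V_{n}$ (so $f(y)=0$) or $y\in V_{k-1}\setminus V_{k}$ for some $k>N$, giving $f(y)=1/k<\varepsilon$. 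Hence $f$ is continuous and lies in $C_{c}(X)$.

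Finally, $Z(f)=\bigcap_{n}V_{n}$ by construction, so $K\subseteq Z(f)\subseteq G$ with $Z(f)\in Z_{c}(X)$, which is the required conclusion. The only potentially delicate point is continuity at the zero set, but it is handled cleanly by the monotone nesting of the $V_{n}$'s; everything else is a straightforward compactness-plus-zero-dimensionality reduction.
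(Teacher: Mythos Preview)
Your proof is correct, but it takes a different route from the paper's. The paper's argument is shorter and more black-boxed: for each $n$ it invokes Proposition~4.3 of \cite{ref14} to get $f_n\in C_c(X)$ with $f_n(K)=0$ and $f_n(X\setminus G_n)=1$, so that $K\subseteq\bigcap_n Z(f_n)\subseteq G$, and then appeals to the fact (Lemma~2.2(a) of \cite{ref5}) that $Z_c(X)$ is closed under countable intersections to conclude that $\bigcap_n Z(f_n)\in Z_c(X)$. Your construction instead first replaces each $G_n$ by a clopen $V_n$ using compactness and zero-dimensionality, and then writes down by hand a single $f\in C_c(X)$ whose zero set is $\bigcap_n V_n$. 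The advantage of your approach is that it is self-contained and makes transparent exactly why countable range survives; in particular, it never needs the lemma that $Z_c(X)$ is closed under countable intersection, which is precisely where the ``3.11(b) does not transfer verbatim'' issue is hidden in the paper's version. The paper's approach, on the other hand, is quicker once those two cited facts are in hand and mirrors the classical $C(X)$ proof more closely.
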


\begin{proof}
	We can write $G=\bigcap\limits_{n=1}^{\infty}G_n$, where each $G_n$ is open in $X$. Since $K\subset X$ and $K\setminus G_n$ are disjoint closed set in $X$ with $K$ compact, hence by proposition 4.3 in [14], there exists an $f_n\in C_c(X)$ such that $f_n(K)=0$ and $f\in (X\setminus G_n)=1$. This implies that $K\subseteq \bigcap \limits_{n=1}^{\infty}Z(f_n)\subseteq G$. Since $Z_c(X)$ is closed under countable intersection by Lemma 2.2(a) in \cite{ref5}, it follows that $\bigcap\limits_{n=1}^{\infty}Z(f_n)=Z(f)$ for some $f\in C_c(X)$.This implies that $K\subseteq Z(f)\subseteq G$.
\end{proof}

Before seriously embarking on almost $P$-spaces, we introduce the following localized version of this requirement.
\begin{definition}
	A point $p\in X$ is called an almost $P$-point on $X$ if for any zero set $Z$ in $X$ containing $p$, $Int_X Z\neq\emptyset$. Thus $X$ is an almost $P$-space if and only if each point on $X$ is an almost $P$-point.
\end{definition}
\begin{theorem}\label{t-4.8}
	The following statements are equivalent for a point $p$ on a zero-dimensional Hausdorff space  $X$.
	
\begin{enumerate}
	\item  $p$ is an almost $P$-point on $X$.
	\item For any any $G_\delta$-set $G$ containing $p$, $Int_X G\neq\emptyset$.
	\item For any $Z\in Z_c(X)$ containing $p$, $Int_XZ\neq\emptyset$.
\end{enumerate}
\end{theorem}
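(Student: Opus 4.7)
The plan is to establish the cycle $(2) \Rightarrow (1) \Rightarrow (3) \Rightarrow (2)$, each implication being short once the right inclusion is invoked.

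For $(2) \Rightarrow (1)$, I would observe that every zero set in $X$ (in the classical sense, i.e.\ $Z(f)$ for some $f \in C(X)$) is automatically a $G_\delta$-set, since $Z(f) = \bigcap_{n=1}^{\infty} f^{-1}(-\tfrac{1}{n},\tfrac{1}{n})$. Hence a zero set $Z$ containing $p$ is a $G_\delta$-set containing $p$, and $(2)$ forces $\operatorname{Int}_X Z \neq \emptyset$.

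For $(1) \Rightarrow (3)$, the inclusion $Z_c(X) \subseteq \{Z(f) : f \in C(X)\}$ is immediate since $C_c(X) \subseteq C(X)$. So any $Z \in Z_c(X)$ through $p$ is a zero set in the classical sense through $p$, and $(1)$ delivers $\operatorname{Int}_X Z \neq \emptyset$.

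The only direction that requires real work is $(3) \Rightarrow (2)$, and here the key ingredient is Theorem~\ref{t-4.6} applied with the compact set $K = \{p\}$: given any $G_\delta$-set $G$ containing $p$, one obtains a $Z \in Z_c(X)$ with $\{p\} \subseteq Z \subseteq G$. Statement~$(3)$ then yields $\operatorname{Int}_X Z \neq \emptyset$, and since $Z \subseteq G$ we get $\operatorname{Int}_X G \supseteq \operatorname{Int}_X Z \neq \emptyset$, as required. The main (and essentially the only) obstacle is exactly this passage from a general $G_\delta$-set down to a countable-range zero set inside it, which is precisely what Theorem~\ref{t-4.6} supplies; the Hausdorff zero-dimensionality of $X$ enters only through that lemma.
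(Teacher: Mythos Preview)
Your proposal is correct and follows essentially the same route as the paper: the paper declares $(2)\Rightarrow(1)$ and $(1)\Rightarrow(3)$ trivial and proves $(3)\Rightarrow(2)$ by invoking Theorem~\ref{t-4.6} with the singleton $\{p\}$ to squeeze a $Z\in Z_c(X)$ between $p$ and $G$, exactly as you do. Your added remarks justifying the trivial implications (zero sets are $G_\delta$, and $Z_c(X)\subseteq Z(X)$) are fine but not new content.
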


\begin{proof}
	$(1)\implies(3)$ and $(2)\implies(1)$ are trivial.\\
	$(3)\implies(2)$: Let (3) hold. Let $G$ be a $G_\delta$ set in $X$ containing $p$. Then by Theorem \ref{t-4.6}, there exists $Z\in Z_c(X)$ such that $p\in Z\subset G$. Since $Int_X Z\neq \emptyset$ it follows from (3) that $Int_XG\neq\emptyset$. 
\end{proof}
\begin{corollary}\label{c-4.9}
A zero-dimensional space $X$ is an almost $P$-space if and only if for any nonempty $Z\in Z_c(X)$, $Int_X Z\neq \emptyset$.
\end{corollary}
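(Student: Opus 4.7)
The plan is to derive the corollary as a straightforward reformulation of Theorem \ref{t-4.8}, using the convention (from the definition just above Theorem \ref{t-4.8}) that $X$ is an almost $P$-space exactly when every point of $X$ is an almost $P$-point. Both implications reduce to rewriting the quantifier ``for every $p\in X$ and every $Z\in Z_c(X)$ with $p\in Z$'' as ``for every nonempty $Z\in Z_c(X)$''.

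For the forward direction, I would assume $X$ is almost $P$, fix an arbitrary nonempty $Z\in Z_c(X)$, and pick any $p\in Z$. Since $p$ is an almost $P$-point and $Z\in Z_c(X)$ contains $p$, the implication $(1)\Rightarrow(3)$ of Theorem \ref{t-4.8} immediately yields $Int_X Z\neq\emptyset$.

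For the converse, assuming that every nonempty member of $Z_c(X)$ has nonempty interior in $X$, I would fix any $p\in X$ and any $Z\in Z_c(X)$ with $p\in Z$; then $Z$ is nonempty, so $Int_X Z\neq\emptyset$ by hypothesis. The implication $(3)\Rightarrow(1)$ of Theorem \ref{t-4.8} then shows that $p$ is an almost $P$-point, and since $p$ was arbitrary, $X$ is almost $P$.

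I expect no genuine obstacle in the proof of the corollary itself: the real technical content is already absorbed into Theorem \ref{t-4.8}, whose only non-trivial part $(3)\Rightarrow(2)$ rests on Theorem \ref{t-4.6} (squeezing a member of $Z_c(X)$ between a compact set and a surrounding $G_\delta$-set in a zero-dimensional Hausdorff space). Once that machinery is in place, the corollary is essentially a tautological rearrangement of quantifiers.
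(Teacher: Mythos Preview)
Your proposal is correct and matches the paper's intent: the paper states Corollary~\ref{c-4.9} as an immediate consequence of Theorem~\ref{t-4.8} without giving any further proof, and your argument is exactly the routine quantifier rearrangement one expects to fill in.
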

We are now ready to offer the following comprehensive theorem giving several characterization of almost $P$-space.
\begin{theorem}\label{t-4.10}
The following statements are equivalent for a zero-dimensional space $X$.
\begin{enumerate}
	\item $X$ is almost $P$.
	\item Every maximal ideal in $C_c(X)$ is a $z^0$-ideal.
	\item Every fixed maximal ideal in $C_c(X)$ is a $z^0$-ideal.
	\item Every $z$-ideal in $C_c(X)$ is a $z^0$-ideal.
\end{enumerate}
\end{theorem}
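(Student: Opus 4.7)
The plan is to prove the cycle $(1) \Rightarrow (4) \Rightarrow (2) \Rightarrow (3) \Rightarrow (1)$. The implications $(4) \Rightarrow (2) \Rightarrow (3)$ are nearly free once one notes that every maximal ideal of $C_c(X)$ is a $z$-ideal: from the description $M_c^p = \{f \in C_c(X) : p \in cl_{\beta_0 X}Z(f)\}$ recalled at the start of Section~3, $f \in M_c^p$ and $Z(f) \subseteq Z(g)$ force $g \in M_c^p$, which is the $z_c$-ideal condition, equivalent to being a $z$-ideal by Theorem~\ref{t-4.5}. For $(3) \Rightarrow (1)$, I would take any nonempty $Z(f) \in Z_c(X)$ and a point $p \in Z(f)$; then $f$ lies in the fixed maximal ideal $M_{A,p}$ of $C_c(X)$, which is a $z^0$-ideal by hypothesis. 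Remark~\ref{r-3.13} therefore makes $f$ a zero-divisor, Theorem~\ref{t-4.1} yields $Int_X Z(f) \neq \emptyset$, and Corollary~\ref{c-4.9} concludes that $X$ is almost $P$.

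The substantive direction is $(1) \Rightarrow (4)$, which I would reduce to the following \emph{key lemma}: if $X$ is zero-dimensional and almost $P$, then $Z(f) = \overline{Int_X Z(f)}$ for every $f \in C_c(X)$. The inclusion $\overline{Int_X Z(f)} \subseteq Z(f)$ is automatic. For the reverse, assume towards a contradiction that some $p \in Z(f) \setminus \overline{Int_X Z(f)}$ exists. Using zero-dimensionality, I pick a clopen neighbourhood $V$ of $p$ that is disjoint from $\overline{Int_X Z(f)}$, and define $f' : X \to \mathbb{R}$ by $f'(x) = f(x)$ on $V$ and $f'(x) = 1$ on $X \setminus V$. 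Since $V$ is clopen, $f'$ is continuous; since $f'(X) \subseteq f(X) \cup \{1\}$ is countable, $f' \in C_c(X)$. Then $Z(f') = V \cap Z(f)$ contains $p$ and is therefore a nonempty member of $Z_c(X)$, so by Corollary~\ref{c-4.9}, $Int_X Z(f') \neq \emptyset$; but $V$ is open and disjoint from $Int_X Z(f)$, so $Int_X Z(f') = Int_X(V \cap Z(f)) = V \cap Int_X Z(f) = \emptyset$, a contradiction.

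With the lemma in hand, $(1) \Rightarrow (4)$ follows quickly. Let $I$ be a $z$-ideal of $C_c(X)$, $f \in I$, and $g \in \mathcal{P}_f$; by Theorem~\ref{t-4.3}, $Int_X Z(f) \subseteq Int_X Z(g)$, and taking closures the lemma gives $Z(f) \subseteq Z(g)$. Since $I$ is a $z_c$-ideal by Theorem~\ref{t-4.5} and $Z(g) = Z(fg)$ with $fg \in I$, we obtain $g \in I$, so $\mathcal{P}_f \subseteq I$ and $I$ is a $z^0$-ideal. The main obstacle is the key lemma: the clopen-truncation trick, which forces a nonempty zero set with empty interior and then plays off Corollary~\ref{c-4.9}, is the pivotal step and genuinely uses both zero-dimensionality (to produce $V$) and the almost $P$ hypothesis.
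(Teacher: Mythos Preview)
Your proof is correct and follows the same overall logical skeleton as the paper: the implications $(4)\Rightarrow(2)\Rightarrow(3)$ and the argument for $(3)\Rightarrow(1)$ via Remark~\ref{r-3.13}, Theorem~\ref{t-4.1}, and Corollary~\ref{c-4.9} match the paper exactly, and the core of $(1)\Rightarrow(4)$ in both cases is the passage from $Int_X Z(f)\subseteq Int_X Z(g)$ (Theorem~\ref{t-4.3}) to $Z(f)\subseteq Z(g)$ via regular-closedness of zero sets, followed by the $z_c$-ideal property (Theorem~\ref{t-4.5}).

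The one genuine difference is how the regular-closedness fact is obtained. The paper simply invokes \cite{ref17} for the statement that in an almost $P$-space every zero set is regular closed, and applies it to members of $Z_c(X)$. You instead prove the needed special case $Z(f)=\overline{Int_X Z(f)}$ for $f\in C_c(X)$ directly, with a clopen-truncation argument that manufactures a nonempty $Z_c$-set with empty interior and contradicts Corollary~\ref{c-4.9}. Your route is more self-contained and makes the use of zero-dimensionality explicit (it is needed to produce the clopen $V$), while the paper's citation is quicker but imports a stronger external result about arbitrary zero sets than the proof actually requires.
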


\begin{proof}
	$(1)\implies (2)$: Let $X$ be almost $P$-space and $M$ be a maximal ideal in $C_c(X)$ . Then we can write $M=M_c^p=\{f\in C_c(X): p\in cl_{\beta_0X}Z(f)\}$ for some point $p\in \beta_0X$. Choose $f\in M$ we shall show that $\mathcal{P}_f \subseteq M$ and hence $M$ is a $z^0$-ideal in $C_c(X)$. So let $g\in \mathcal{P}_f$. Then from Theorem \ref{t-4.3} we get $Int_X Z(f)\subseteq Int_XZ(g)$. But since $X$ is almost $P$, each zero set in $X$ is regular closed see \cite{ref17}. This implies that $Z(f)=cl_X(Int_XZ(f))\subseteq cl_X(Int_XZ(g))=Z(g)$. But $f\in M$ implies that $p\in cl_{\beta_0X}Z(f)$, consequently  $p\in cl_{\beta X}Z(g)$ and hence $g\in M_c^p=M$. Thus $\mathcal{P}_f\subseteq M$.\\
	$(2)\implies(3)$: is trivial.\\
	$(3)\implies(1)$: Let (3) be true. It is sufficient to show in view of Corollary \ref{c-4.9} that, for a non-empty $Z\in Z_c(X)$, $Int_XZ\neq\emptyset$. Indeed $Z=Z(f)$ for some $f\in C_c(X)$. Choose a point $p\in Z$, then $f\in M_{p,c}=\{g\in C_c(X):g(p)=0\}$. Now by (3), $M_{p,c}$ is a $z^0$-ideal, consequently by Remark \ref{r-3.13}, $f$ is a divisor of zero in $C_c(X)$. This implies by Theorem \ref{t-4.1} that , $Int_XZ(f)\neq\emptyset$.\\
	$(4)\implies(2)$: is trivial because each maximal ideal in a ring $R$ is a $z$-ideal.\\
	$(1)\implies(4)$: Let $X$ be almost $P$-space and $I$ be a $z$-ideal in $C_c(X)$. Then by Theorem \ref{t-4.5}, $I$ is a $z_c$-ideal in $C_c(X)$. Let $f\in I$ we need to verify that $\mathcal{P}_f\subseteq I$ in order to show that $I$ is a $z^0$-ideal in $C_c(X)$. Choose $g\in \mathcal{P}_f$ then it follows from Theorem \ref{t-4.3} that $Int_XZ(f)\subseteq Int_X Z(g)$. As $X$ is almost $P$ we can therefore write: 
	$Z(f)=cl_X(Int_XZ(f))\subseteq cl_X(Int_XZ(g) )=Z(g)$. Since $f\in I$ and $I$ is a $z_c$-ideal, it follows that $g\in I$. Thus $\mathcal{P}_f\subseteq I$.
\end{proof}

We now show that on choosing $A_c(X)\in \Sigma_c(X)\setminus \{C_c(X)\}$ Theorem \ref{t-4.10} can not be improved by writing that $X$ is almost $P$ if and only if each maximal ideal in $A_c(X)$ is a $z^0$-ideal (respectively each $z$-ideal in $A_C(X)$ is a $z^0$-ideal). 
\begin{theorem}\label{t-4.11}
	Let $A_c(X)$ be an intermediate ring in $\Sigma_c(X) $ properly contained in $C_c(X)$. Then there exists a maximal ideal $M$ in $A_c(X)$ which is not a $z^0$-ideal (clearly $M$ is also a $z$-ideal in $A_c(X)$ which is not a $z^0$-ideal).
\end{theorem}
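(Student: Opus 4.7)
The plan is to exhibit a single non-unit $g\in A_c(X)$ that is not a divisor of zero in $A_c(X)$; any maximal ideal $M$ containing $g$ then fails to be a $z^0$-ideal, because by Remark~\ref{r-3.13} every element of a $z^0$-ideal is a divisor of zero.

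First, since $A_c(X)\subsetneq C_c(X)$, choose some $f\in C_c(X)\setminus A_c(X)$; note that $f$ is necessarily unbounded, as $C_c^*(X)\subseteq A_c(X)$. Set
\[
g \;=\; \frac{1}{1+f^2}.
\]
Then $g\in C_c(X)$ (it has countable range, being the composition of $f$ with a fixed function), and $0<g\le 1$, so $g\in C_c^*(X)\subseteq A_c(X)$. Moreover $Z(g)=\emptyset$, hence $\mathrm{Int}_X Z(g)=\emptyset$, and Theorem~\ref{t-4.1} guarantees that $g$ is not a divisor of zero in $A_c(X)$.

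Next I would verify that $g$ is not a unit in $A_c(X)$. If it were, then $1+f^2=1/g$ would lie in $A_c(X)$, whence $f^2\in A_c(X)$. The elementary inequality $|f|\le 1+f^2$ (since $(|f|-\tfrac12)^2+\tfrac34>0$) together with absolute convexity of $A_c(X)$ in $C_c(X)$ (Theorem~\ref{t-3.16}) would force $|f|\in A_c(X)$. Writing $f=f^+-f^-$ with $0\le f^\pm\le|f|$ and applying absolute convexity once more yields $f^\pm\in A_c(X)$, and therefore $f\in A_c(X)$, contradicting the choice of $f$. Consequently $g$ is a non-unit, and so lies in some maximal ideal $M$ of $A_c(X)$.

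Finally, since $g\in M$ but $g$ is not a divisor of zero, Remark~\ref{r-3.13} prevents $M$ from being a $z^0$-ideal; because every maximal ideal is automatically a $z$-ideal, $M$ also provides the required $z$-ideal that fails to be a $z^0$-ideal. The only mildly delicate point is the non-unit step, which hinges on the absolute convexity of $A_c(X)$ together with the inequality $|f|\le 1+f^2$; everything else is an immediate application of Theorem~\ref{t-4.1} and Remark~\ref{r-3.13}.
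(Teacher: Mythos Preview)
Your proof is correct and follows essentially the same route as the paper: pick $f\in C_c(X)\setminus A_c(X)$, form a bounded nowhere-vanishing function from it, and use absolute convexity to show this function cannot be a unit of $A_c(X)$, so any maximal ideal containing it fails to be a $z^0$-ideal by Remark~\ref{r-3.13} and Theorem~\ref{t-4.1}. The only cosmetic difference is that the paper takes $g=\dfrac{1}{1+|f|}$, which shortens the non-unit step (one application of absolute convexity gives $1+|f|\notin A_c(X)$ directly), whereas your choice $g=\dfrac{1}{1+f^2}$ requires the auxiliary inequality $|f|\le 1+f^2$ and the $f=f^+-f^-$ decomposition; both lead to the same conclusion.
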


\begin{proof}
	We select $f\in C_c(X)$ such that $f\notin A_c(X)$. Take $g=\frac{1}{1+|f|}$, then $g\in C_c^*(X)\subseteq A_c(X)$. It follows from absolute convexity of $A_c(X)$ in $C_c(X)$ (Theorem \ref{t-3.16}) that $1+|f|\notin A_c(X)$. Hence $g$ is not invertible in $A_c(X)$. So there exists a maximal ideal $M$ in $A_c(X)$ such that $g\in M$. Since $g$ is not a divisor of zero in $A_c(X)$ (Theorem \ref{t-4.1}). It follows from Remark \ref{r-3.13} that $M$ is not a $z^0$-ideal in $A_c(X)$.
\end{proof}

Theortem \ref{t-4.10} and Theorem \ref{t-4.11} combined together yield the following characterization of $C_c(X)$ among members of $\Sigma _c(X)$.
\begin{theorem}\label{t-4.12}
	Let $X$ be almost $P$. Then the following three statements are equivalent for an $A_c(X)\in \Sigma_c(X)$.
	\begin{enumerate}
		\item Each maximal ideal of $A_c(X)$ is a $z^0$-ideal
		\item Each $z$-ideal of $A_c(X)$ is a $z^0$-ideal
		\item $A_c(X)=C_c(X)$.
	\end{enumerate}
\end{theorem}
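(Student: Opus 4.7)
The plan is to deduce the theorem as a direct consequence of the two preceding results, Theorem \ref{t-4.10} and Theorem \ref{t-4.11}, since between them they already pin down both directions. Because (2) trivially implies (1) (every maximal ideal is a $z$-ideal), it suffices to establish the cycle $(3) \Rightarrow (2) \Rightarrow (1) \Rightarrow (3)$, and symmetrically $(3) \Rightarrow (1)$ will follow as a by-product.

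First I would dispose of $(3) \Rightarrow (1)$ and $(3) \Rightarrow (2)$. If $A_c(X) = C_c(X)$, then the hypothesis that $X$ is almost $P$ together with Theorem \ref{t-4.10} gives at once that every maximal ideal (respectively every $z$-ideal) of $A_c(X) = C_c(X)$ is a $z^0$-ideal. Thus (3) entails both (1) and (2) with essentially no further work. The implication $(2) \Rightarrow (1)$ is the standard observation that a maximal ideal is in particular a $z$-ideal of the ambient commutative ring with unity, so the assumption in (2) applies to every maximal ideal.

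Next I would handle $(1) \Rightarrow (3)$, which I expect is the only implication that requires a genuine argument, and is where Theorem \ref{t-4.11} plays its role. Argue by contraposition: suppose $A_c(X) \neq C_c(X)$, so that $A_c(X)$ is an intermediate ring properly contained in $C_c(X)$. Then Theorem \ref{t-4.11} produces a maximal ideal $M$ of $A_c(X)$ which is not a $z^0$-ideal, directly contradicting (1). Hence (1) forces $A_c(X) = C_c(X)$. Note that this argument does not use the almost $P$ hypothesis at all; the almost $P$ condition enters only through $(3) \Rightarrow (1)$, where it is essential.

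Because $(1) \Rightarrow (3)$ is established and the cycle $(3) \Rightarrow (2) \Rightarrow (1) \Rightarrow (3)$ is now closed, the three conditions are equivalent. The main (and only) potential obstacle would be verifying that the $M$ supplied by Theorem \ref{t-4.11} really is a $z$-ideal not a $z^0$-ideal, so that the contrapositive of $(2) \Rightarrow (3)$ also goes through by the same Theorem; but this is already stated parenthetically inside Theorem \ref{t-4.11}, so no additional work is needed. In short, the theorem is a clean packaging of Theorems \ref{t-4.10} and \ref{t-4.11}, with no new technical input required.
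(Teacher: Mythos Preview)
Your proposal is correct and matches the paper's approach exactly: the paper simply states that Theorem~\ref{t-4.12} is obtained by combining Theorems~\ref{t-4.10} and~\ref{t-4.11}, without giving further details. Your write-up supplies precisely the routine verification of that combination.
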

Compare with similar kind of characterizations in \cite{ref9},\cite{ref21}, \cite{ref22}.

\section{Pseudocompact spaces $X$ via $U_c$-topologies/$m_c$-topologies on $C_c(X)$}

\begin{Notation}

		For $f\in C_c(X)$ and $\epsilon >0$ in $\mathbb{R}$. Let $U_c(f,\epsilon)=\{g\in C_c(X):  Sup_{x\in X} |f(x)-g(x)|<\epsilon\}$
	\end{Notation}
It is easy to check that the family \{ $U_c(f,\epsilon):f\in C_c(X), \epsilon>0\}$ is an open base for some topology on $C_c(X)$ which we call the $U_c$-topology on $C_c(X)$ and $C_c(X)$ becomes an additive topological group in this topology. The following proposition shows that $C_c(X)$ neither a topological ring nor a topological vector space unless $X$ is pseudocompact. 
\begin{theorem}\label{t-5.2}
	For a zero-dimensional Hausdorff space $X$, the following statements are equivalent:
	\begin{enumerate}
		\item $X$ is pseudocompact.
		\item $C_c(X)$ with $U_c$-topology is a topological ring.
		\item $C_c(X)$ with $U_c$-topology is a topological vector space.
		\item The set $W$ of all units in $C_c(X)$ is open in $U_C$-topology.
	\end{enumerate}	
\end{theorem}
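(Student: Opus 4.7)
The strategy is to reduce everything to the pseudocompactness criterion $X\text{ pseudocompact}\iff C_c(X)=C_c^*(X)$ (Theorem 6.3 of \cite{ref15}). Under (1) the $U_c$-topology on $C_c(X)$ collapses to the sup-norm topology on $C_c^*(X)$, and when (1) fails one can produce an unbounded $f_0\in C_c(X)$ that breaks each of the other conditions. I would arrange the proof as $(1)\Rightarrow(2),(3),(4)$ followed by the three independent converses $(2)\Rightarrow(1)$, $(3)\Rightarrow(1)$, $(4)\Rightarrow(1)$.

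For $(1)\Rightarrow(2)$ and $(1)\Rightarrow(3)$, once $C_c(X)=C_c^*(X)$ the standard estimates $\|fg-f_0g_0\|\le \|f-f_0\|\,\|g\|+\|f_0\|\,\|g-g_0\|$ and $\|\lambda f-\lambda_0 f_0\|\le|\lambda-\lambda_0|\,\|f\|+|\lambda_0|\,\|f-f_0\|$ (with $\|\cdot\|$ the sup-norm) yield joint continuity of ring multiplication and of scalar multiplication. For $(1)\Rightarrow(4)$, any unit $f\in C_c^*(X)$ has an inverse bounded by some $M$, so $|f|\ge 1/M$ on $X$, and every $g\in U_c(f,1/(2M))$ satisfies $|g|\ge 1/(2M)>0$ and is therefore also a unit; this gives $U_c(f,1/(2M))\subseteq W$.

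Conversely, assuming $X$ is not pseudocompact, fix an unbounded $f_0\in C_c(X)$. Then $(2)$ fails at $(f_0,0)$: the constants $g_\delta\equiv\delta/2$ lie in every $U_c$-neighbourhood of $0$ but $\sup|f_0g_\delta|=(\delta/2)\sup|f_0|=\infty$. Similarly $(3)$ fails because $\sup|\lambda f_0|=\infty$ for every $\lambda\neq 0$, so $\lambda\mapsto\lambda f_0$ is not even continuous at $0$. For the failure of $(4)$ put $g=1/(1+f_0^{\,2})\in C_c^*(X)\subseteq C_c(X)$; its inverse $1+f_0^{\,2}$ lies in $C_c(X)$, so $g$ is a unit, yet $g(X)$ is a countable subset of $(0,1]$ whose infimum is $0$ (because $f_0$ is unbounded). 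Given $\epsilon>0$ pick $\delta\in g(X)$ with $0<\delta<\epsilon$ and $x_0\in X$ with $g(x_0)=\delta$; then $g-\delta\in U_c(g,\epsilon)$ vanishes at $x_0$ and so is not a unit, showing that $W$ is not a neighbourhood of $g$.

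The only genuinely delicate point is the last construction, since the naive perturbation $g-\delta$ of a unit need not itself fail to be a unit. The countability of $g(X)$ is what rescues the argument: it lets one choose $\delta$ inside the range of $g$, which forces an actual zero of $g-\delta$. Everything else amounts to routine sup-norm estimates together with the cited equivalence $X\text{ pseudocompact}\iff C_c(X)=C_c^*(X)$.
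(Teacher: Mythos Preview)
Your proof is correct and follows essentially the same route as the paper's: both directions rest on the equivalence $X$ pseudocompact $\iff C_c(X)=C_c^*(X)$, and for the converse both produce an unbounded $f$ in $C_c(X)$ to break continuity of multiplication and of scalar multiplication at $(f,0)$ (respectively $(0,f)$), and then use a positive unit $g$ with $\inf g(X)=0$ together with the perturbation $g-g(a)$ to show $W$ is not open. One minor remark: your closing paragraph overstates the role of countability---the paper simply chooses $a\in X$ with $0<g(a)<\epsilon$ and sets $h=g-g(a)$, which is exactly your $g-\delta$; what does the work is that $g$ attains values arbitrarily close to $0$, not that $g(X)$ is countable.
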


\begin{proof}
	First assume that $X$ is pseudocompact ie; $C_c(X)=C_c^*(X)$. Then the $U_C$-topology on $C_c(X)$ coincides with the uniform norm topology on it and $C_c(X)$ becomes a real normed algebra. It is a standard result in Functional Analysis that a real normed algebra is a topological ring as well as a real topological vector space, where the units $W$ make an open subset of $C_c(X)$.
	Conversely, let $X$ be not pseudocompact. Then there exists an $f\in C_c(X)\setminus C_c^*(X)$ with $f\geqq1$. Take $g=\frac{1}{f}$. Then $g\in C_c^*(X)$ and it takes values arbitrarily near to zero on $X$. We note that for arbitrary $\epsilon>0,$ $ \delta>0$ in $\mathbb{R}$,  $\frac{\epsilon}{\underline{2}}$ (the constant function on $X$ with value $\frac{\epsilon}{2}$)~$\in U_c(0,\epsilon)$ and $f\in U_c(f,\delta)$ while $\frac{\epsilon}{\underline{2}}.f\notin U_c(0,1)$. This proves that the function: $C_C(X)\times C_c(X)\rightarrow C_C(X)$ defined as follows $(k,l)\mapsto k.l$ is not continuous at the point $(0,f)$. It can be proved analogously that the scalar multiplication function:
	\begin{center}
	$\mathbb{R}\times C_c(X)\rightarrow C_c(X)$  
	
    $(r,f)\mapsto r.f$
\end{center}
    is not continuous at $(0,f)$. Thus $C_c(X)$ neither a topological ring nor a topological vector space over $R$.

Finally we observe that $g$ is a unit of $C_c(X)$ i.e; $g\in W$. To show that $W$ is not an open set we shall show that $g$ is not an interior point of $W$. Choose $\epsilon >0$ in $\mathbb{R}$. Since $g$ takes values arbitrarily near to zero on $X$, there exists $a\in X$ such that $0<g(a)<\epsilon$. Take $h=g-g(a)$, then $h\in C_c(X)$ and $h\in U_c(g,\epsilon)$ but $h$ is not a unit of $C_c(X)$ as $h(a)=0$. Thus $U_c(g,\epsilon)$ is not a subset of $W$ and hence $g$ is not an interior point of $W$.
\end{proof}

As in the classical scenario with $C(X)$ (see 2N, \cite{ref13}) it is easy to observe that the relative topology on $C_c^*(X)$ induced by the $m_c$-topology on $C_c(X)$ is finer that the uniform norm topology on $C_c^*(X)$. The following proposition  says that these two topologies coincide when and only when $X$ is pseudocompact.
\begin{theorem}\label{t-5.3}
	The following two statements are equivalent for a Hausdorff zero-dimensional space $X$.
	\begin{enumerate}
		\item $X$ is pseudocompact.
		\item The relative $m_c$-topology on $C_c^*(X)$ is identical to the uniform norm topology on it.
	\end{enumerate}
\end{theorem}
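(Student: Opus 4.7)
The plan is to prove the two implications in turn, exploiting the standard observation (stated in the paragraph preceding the theorem) that the relative $m_c$-topology on $C_c^*(X)$ is always finer than the uniform norm topology. This reduces $(1)\Rightarrow(2)$ to exhibiting the reverse comparison under pseudocompactness, while $(2)\Rightarrow(1)$ is cleanest via the contrapositive.

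For $(1) \Rightarrow (2)$, assume $X$ is pseudocompact, so $C_c(X) = C_c^*(X)$ by (\cite{ref15}, Theorem 6.3). Given a basic $m_c$-neighbourhood $M(g,u)\cap C_c^*(X)$ of $g$ inside $C_c^*(X)$, where $u$ is a positive unit of $C_c(X)$, the key remark is that both $u$ and $1/u$ now lie in $C_c^*(X)$. In particular $1/u$ is bounded above, so $u$ is bounded below by some constant $\epsilon>0$. A routine check then gives $U_c(g,\epsilon)\cap C_c^*(X)\subseteq M(g,u)\cap C_c^*(X)$, which shows the uniform norm topology is at least as fine as the relative $m_c$-topology; combined with the reverse comparison that holds in general, the two topologies coincide.

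For $(2)\Rightarrow(1)$, assume $X$ is not pseudocompact, and produce an $f\in C_c(X)\setminus C_c^*(X)$; replacing $f$ with $1+|f|$ we may take $f\geq 1$ and unbounded. Set $u=1/f$, which is a positive unit of $C_c(X)$ (its reciprocal $f$ lies in $C_c(X)$) that is bounded yet takes values arbitrarily close to $0$. The plan is to exhibit the $m_c$-neighbourhood $M(0,u)\cap C_c^*(X)$ of the zero function and show no uniform norm ball $U_c(0,\epsilon)\cap C_c^*(X)$ is contained in it: for any $\epsilon>0$, the constant function $\epsilon/2$ lies in $U_c(0,\epsilon)$, yet the pointwise inequality $\epsilon/2 < u(x)$ fails somewhere on $X$ because $\inf_{x\in X} u(x)=0$. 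This witnesses that the relative $m_c$-topology is strictly finer than the uniform norm topology at $0$, contradicting (2).

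The only mildly delicate point is selecting the witness positive unit $u=1/f$ in the second direction so that it is simultaneously bounded (ensuring membership of test elements in $C_c^*(X)$) and infinitesimal in value (defeating every uniform ball around $0$); the rest is bookkeeping with the two bases. I do not foresee a substantive obstacle.
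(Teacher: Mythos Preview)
Your argument is correct. The forward direction $(1)\Rightarrow(2)$ is essentially identical to the paper's: both note that a positive unit $u$ of $C_c(X)=C_c^*(X)$ is bounded away from zero, so any $m_c$-basic set $M(g,u)$ contains a uniform ball of radius $\inf u>0$.

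For $(2)\Rightarrow(1)$ both you and the paper argue by contrapositive and both begin by producing a positive unit $u\in C_c^*(X)$ of $C_c(X)$ with $\inf u=0$. The packaging thereafter differs slightly. You directly exhibit $M(0,u)\cap C_c^*(X)$ as an $m_c$-neighbourhood of $0$ containing no uniform ball, using the constant function $\epsilon/2$ as witness. The paper instead observes that the same unit $u$ forces the set of constant functions to be \emph{discrete} in the relative $m_c$-topology (since $M(\underline{r},u)$ meets the constants only in $\underline{r}$), and then concludes indirectly: scalar multiplication cannot be continuous, so the relative $m_c$-topology is not a topological-vector-space topology and hence cannot coincide with the norm topology. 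Your route is a touch more economical, sidestepping the topological-vector-space detour; the paper's route yields the marginally stronger side fact that $(C_c^*(X),m_c)$ fails to be a topological vector space whenever $X$ is not pseudocompact. Either way, the substantive content---a positive unit with infimum zero separating the two neighbourhood filters at a constant function---is the same.
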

\begin{proof}
	First assume that $X$ is pseudocompact. In view of the above observations, it is sufficient to show that that relative $m_c$-topology $C_c^*(X)$ is weaker than the uniform norm topology. Choose $f\in C_c^*(X)$ and a positive unit $u$ of this ring. Then $u$ is bounded away from zero so that we can write $u(x)\geq \lambda $ for all $x\in X$ for some $\lambda>0$. It follows that the closed ball $\{g\in C_c^*(X)):||f-g||\leq\lambda\}$ centered at $f$ with radius $\lambda$ in the norm topology is contained in $M(f,u)$ and we are through.
	
To prove the converse let $X$ be not pseudocompact. To show that the relative $m_c$-topology on $C_c^*(X)$ is not the same as the uniform norm topology on it, we shall show that $C_c^*(X)$ in the former topology is not a topological vector space, Since $X$ is not pseudocompact there exists $k\in C_c^*(X)$ such that $k$ is a positive unit of $C_c(X)$ which takes values arbitrarily near to zero on X. It follows that there does not exist any pair of distinct real numbers $r,s$ with $|\underline r -\underline s|\leq k$ on $X$. Hence for any $r\in \mathbb{R}$, $M(\underline{r},k)\cap \{\underline s:s\in \mathbb{R}\}=\{\underline{r}\}$ in other words the set of all constant functions in $C_c^*(X)$ is a discrete subset of $C_c^*(X)$ in the relative $m_c$-topology. Consequently the scalar multiplication map: $\mathbb{R}\times C^*(X)\rightarrow C^*(X))$ defined as follows $(r,f)\rightarrow r.f$ is not continuous at the points like $(r,\underline{s})$ with $r,s \in$ $\mathbb{R}$, here $\underline{s}$ stands for for the constant function with value $'s'$ on $X$.

\end{proof}

\section{Questions of Noetherianness/ Artinianness about $C_c(X)$ and their chosen subrings.}

A commutative ring $R$ (with or without identity) is called Noetherian/ Artinian if any ascending sequence of ideals $I_1\subseteq I_2\subseteq .....$/ descending sequence of ideals $I_1\supseteq I_2\supseteq .....$ terminates at a finite stage. It is established in \cite{ref2} that for a Tychonoff space $X$, $C(X)$ (respectively $C^*(X)$) is never Noetherian and also never Artinian unless $X$ is a finite set. Noetherianness/ Artinianness of a selected class of subrings of $C(X)$ are also examined in \cite{ref2}. In the present section our intention is to record the appropriate counterparts of the problems dealt in \cite{ref2} in the context of the rings $C_c(X)$ and $C_c^*(X)$ for a zero-dimensional Hausdorff space $X$.

A family $\mathcal{P}$ of closed set in $X$ is called an ideal of closed sets if 
\begin{enumerate}
	\item$A\in \mathcal{P}, B\in \mathcal{P}\implies A\cup B\in \mathcal{P}$ and 
 	\item $A\in \mathcal{P}$ and $K\subseteq A$ with $K$ closed in $X$ $\implies K\in \mathcal{P}$
\end{enumerate}

\begin{Notation}
Let $\Omega(X)$ stand for the family of all ideals of closed sets in $X$ with $\mathcal{P}\in \Omega(X)$. We associate the following two subrings of $C(X)$:\\
$C_{\mathcal{P}}(X)=\{f\in C(X):cl_X (X\setminus Z(f))\in \mathcal{P}\}$ and\\
$C_{\infty}^{\mathcal{P}}(X)=\{f\in C(X):$ for each $n\in N,\{x\in X:|f(x)|\geqq \frac{1}{n}\}\in \mathcal{P}\}$ with $C_{\mathcal{P}}(X)$ a $z$-ideal in $C(X)$. $X$ is called locally $\mathcal{P}$ if each point $x\in X$ has an open neighbourhood $W$ with its closure lying on $\mathcal{P}$. Thus the local $\mathcal{P}$ condition reduces to local compactness if $\mathcal{P}$ is the ideal of all compact sets in $X$ and in this case $C_\mathcal{P}(X)=C_K(X)$ and $C_{\infty}^\mathcal{P}(X)=C_{\infty}(X)$. For more information on ideal related problems we refer the articles \cite{ref3},\cite{ref4}.

The following result is standard and is recorded in \cite{ref2}, Lemma 2.1.

\end{Notation}

\begin{lemma}\label{l-6.2}
	For any finitely many commutative rings $R_1,R_2,...R_n$ each with identity, ideals of the direct product $R_1\times R_2\times...\times R_n$ are precisely of the form: $J_1\times J_2\times...\times J_n$ where for $j=1,2,...n, \ J_j $ is an ideal in $R_j$.
\end{lemma}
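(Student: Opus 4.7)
The plan is to reduce to the case $n=2$ by a routine induction on $n$, since once the two-factor case is settled, one can regard $R_1 \times R_2 \times \cdots \times R_n$ as $R_1 \times (R_2 \times \cdots \times R_n)$, apply the two-factor result, and invoke the induction hypothesis on the second factor. So I would state the induction step briefly and concentrate on $n = 2$.

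For $n = 2$, one inclusion is immediate: given ideals $J_1 \trianglelefteq R_1$ and $J_2 \trianglelefteq R_2$, the set $J_1 \times J_2$ is closed under coordinatewise addition and absorbs multiplication by arbitrary elements of $R_1 \times R_2$, so it is an ideal. For the other inclusion, I would start with an arbitrary ideal $I \trianglelefteq R_1 \times R_2$ and set
\[
J_1 = \pi_1(I) = \{a \in R_1 : (a,b) \in I \text{ for some } b \in R_2\}, \qquad J_2 = \pi_2(I),
\]
where $\pi_1, \pi_2$ are the canonical projections. Since the $\pi_i$ are surjective ring homomorphisms, each $J_i$ is an ideal of $R_i$. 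By construction $I \subseteq J_1 \times J_2$, so the whole content is the reverse inclusion.

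The key step where the presence of identities is used is the following. Given $(a,b) \in J_1 \times J_2$, by definition there exist $b' \in R_2$ and $a' \in R_1$ with $(a, b') \in I$ and $(a', b) \in I$. Using the idempotents $(1_{R_1}, 0)$ and $(0, 1_{R_2})$ of $R_1 \times R_2$, I would write
\[
(a, 0) = (1_{R_1}, 0)\cdot(a, b') \in I, \qquad (0, b) = (0, 1_{R_2})\cdot(a', b) \in I,
\]
and then $(a,b) = (a,0) + (0,b) \in I$. Hence $J_1 \times J_2 \subseteq I$, completing the equality $I = J_1 \times J_2$.

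There is no real obstacle here; the only point deserving emphasis is that the argument genuinely requires each $R_j$ to have an identity, because it is the idempotents $(1,0)$ and $(0,1)$ that let us split an arbitrary element of $I$ into its two coordinate pieces while staying inside $I$. Without identities the projections $J_i$ would still be ideals but the decomposition step would fail, so I would call this out explicitly when writing the proof.
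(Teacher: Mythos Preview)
Your proof is correct and is the standard argument for this fact. Note, however, that the paper does not actually prove Lemma~\ref{l-6.2}; it simply records it as a standard result and cites \cite{ref2}, Lemma~2.1. So there is no ``paper's own proof'' to compare against---your write-up would in fact be an addition rather than a replacement.
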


We record the following convenient version of the local $\mathcal{P}$ condition for a zero-dimensional space $X$.
\begin{theorem}\label{t-6.3}
	For a zero-dimensional Hausdorff space $X$, the following statements are equivalent:\begin{enumerate}
		\item $X$ is locally $\mathcal{P}$
		\item $\{Z(f):f\in C_\mathcal {P}(X)\cap C_c(X)\}$ is a base for the closed sets in $X$.
		\item  $\{Z(f):f\in C_\infty^\mathcal {P}(X)\cap C_c(X)\}$ is a closed base for $X$.
		\item  $\{Z(f):f\in C_\mathcal {P}(X)\cap C_c^*(X)\}$ is a closed base for $X$.
		\item $\{Z(f):f\in C_\infty^\mathcal {P}(X)\cap C_c^*(X)\}$ is a closed base for $X$.
	\end{enumerate}
\end{theorem}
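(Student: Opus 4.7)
The plan is to establish the cycle $(1) \Rightarrow (4) \Rightarrow (2) \Rightarrow (3) \Rightarrow (1)$, from which the remaining implications $(4) \Rightarrow (5) \Rightarrow (3)$ are subsumed at once. A preliminary observation turns most of these into bookkeeping: the inclusion $C_{\mathcal{P}}(X) \subseteq C_{\infty}^{\mathcal{P}}(X)$ holds, because for $f \in C_{\mathcal{P}}(X)$ and any $n \in \mathbb{N}$ the closed set $\{x : |f(x)| \geq 1/n\}$ sits inside $X \setminus Z(f) \subseteq cl_X(X \setminus Z(f)) \in \mathcal{P}$ and hence belongs to $\mathcal{P}$ by the ideal property. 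Combined with the obvious $C_c^*(X) \subseteq C_c(X)$, this delivers $(4) \Rightarrow (2) \Rightarrow (3)$ and $(4) \Rightarrow (5) \Rightarrow (3)$ for free, since enlarging the family of generators of zero sets cannot destroy the closed-base property.

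For $(1) \Rightarrow (4)$, I would exploit zero-dimensionality together with the local $\mathcal{P}$ hypothesis. Given a closed set $F$ in $X$ and a point $p \notin F$, local $\mathcal{P}$ furnishes an open neighbourhood $W$ of $p$ with $cl_X W \in \mathcal{P}$; intersecting $W$ with $X \setminus F$ and invoking the clopen base, I extract a clopen neighbourhood $V$ of $p$ with $V \subseteq W \cap (X \setminus F)$. Being a closed subset of $cl_X W$, the set $V$ itself lies in $\mathcal{P}$. The characteristic function $\chi_V$ is continuous (since $V$ is clopen), two-valued, and bounded, so $\chi_V \in C_c^*(X)$; one has $Z(\chi_V) = X \setminus V \supseteq F$ with $\chi_V(p) = 1$; and $cl_X(X \setminus Z(\chi_V)) = V \in \mathcal{P}$, so $\chi_V \in C_{\mathcal{P}}(X)$ as well. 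Hence $\chi_V \in C_{\mathcal{P}}(X) \cap C_c^*(X)$ separates $F$ from $p$ in the required fashion.

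The most delicate step is $(3) \Rightarrow (1)$, where one must squeeze a neighbourhood whose closure lies in $\mathcal{P}$ out of a continuous function whose superlevel sets $\{|f| \geq 1/n\}$ merely lie in $\mathcal{P}$. Fix $p \in X$. By (3) there exists $f \in C_{\infty}^{\mathcal{P}}(X) \cap C_c(X)$ with $p \notin Z(f)$. Setting $c = |f(p)| > 0$ and picking $n \in \mathbb{N}$ with $1/n \leq c/2$, the open set $U_p = \{x : |f(x)| > c/2\}$ is a neighbourhood of $p$, and $cl_X U_p \subseteq \{x : |f(x)| \geq c/2\} \subseteq \{x : |f(x)| \geq 1/n\} \in \mathcal{P}$; being a closed subset of a member of $\mathcal{P}$, the set $cl_X U_p$ itself lies in $\mathcal{P}$, so $X$ is locally $\mathcal{P}$ at $p$. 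The main technical obstacle is the hybrid zero-dimensional/local $\mathcal{P}$ reduction in $(1) \Rightarrow (4)$, namely securing a clopen neighbourhood simultaneously inside $X \setminus F$ and of closure in $\mathcal{P}$; once that is in place, the scale trick ($c/2$ versus $1/n$) in $(3) \Rightarrow (1)$ runs routinely.
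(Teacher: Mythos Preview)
Your argument is correct. The paper itself omits the proof of this theorem entirely, pointing the reader to Theorem~4.3 of \cite{ref2} and saying only that one should ``closely follow the arguments and make some necessary modifications.'' Your write-up supplies exactly such a self-contained argument: the containment $C_{\mathcal{P}}(X)\subseteq C_\infty^{\mathcal{P}}(X)$ together with $C_c^*(X)\subseteq C_c(X)$ collapses $(4)\Rightarrow(2)\Rightarrow(3)$ and $(4)\Rightarrow(5)\Rightarrow(3)$ to trivialities, the characteristic function of a clopen set inside a $\mathcal{P}$-neighbourhood handles $(1)\Rightarrow(4)$, and the superlevel-set trick in $(3)\Rightarrow(1)$ is the standard way to extract a $\mathcal{P}$-neighbourhood from membership in $C_\infty^{\mathcal{P}}(X)$. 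Since the referenced Theorem~4.3 in \cite{ref2} treats the analogous statement for $C(X)$ rather than $C_c(X)$, the only ``necessary modification'' is precisely your use of clopen sets and their two-valued characteristic functions in place of Urysohn-type functions, which is the natural move in the zero-dimensional setting and lands you in $C_c^*(X)$ for free.
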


We omit the proof of this theorem, which can be done by closely following the arguments and making some necessary modifications in the proof of Theorem 4.3 in \cite{ref2}. We are now ready to enunciate the main theorem in this section.
\begin{theorem}\label{t-6.4}
	Let $\mathcal{P}\in \Omega(X)$ where $X$ is a zero-dimensional Hausdorff space which is further locally $\mathcal{P}$. Then the following statements are equivalent:\begin{enumerate}
		\item$ C_\mathcal {P}(X)\cap C_c(X)$ is a Noetherian Ring.
		\item$ C_\mathcal {P}(X)\cap C_c(X)$ is an Artinian Ring.
		\item $C_\infty^\mathcal {P}(X)\cap C_c(X)$ is a Noetherian Ring.
		\item $C_\infty^\mathcal {P}(X)\cap C_c(X)$ is an Artinian Ring.
		\item $X$ is a finite set.
	\end{enumerate}
\end{theorem}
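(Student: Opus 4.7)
The plan is to prove the equivalence by establishing $(5)\Rightarrow(1),(2),(3),(4)$ trivially and each of $(1)$--$(4)\Rightarrow(5)$ by contrapositive. For the easy direction, if $X$ is finite then being Hausdorff forces $X$ to be discrete, so $C(X)\cong\mathbb{R}^{|X|}$ and all four subrings collapse to this finite-dimensional ring, which is trivially both Noetherian and Artinian. For the converse I aim to show that whenever $X$ is infinite, one can exhibit inside each of $R:=C_\mathcal{P}(X)\cap C_c(X)$ and $S:=C_\infty^\mathcal{P}(X)\cap C_c(X)$ simultaneously a strictly ascending chain and a strictly descending chain of ideals, ruling out all four of $(1)$--$(4)$ at once.

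The key construction is an inductively chosen sequence $U_1,U_2,\ldots$ of pairwise disjoint nonempty clopen subsets of $X$, each lying in $\mathcal{P}$. At stage $n$ the residual set $V_{n-1}:=X\setminus(U_1\cup\cdots\cup U_{n-1})$ is infinite; I would pick two distinct points $a,b\in V_{n-1}$, use zero-dimensionality to separate them by a clopen subset of $V_{n-1}$, obtaining a clopen partition $V_{n-1}=A\sqcup B$ with $a\in A$ and $b\in B$. At least one of $A,B$ is infinite, say $B$, and by the local $\mathcal{P}$ hypothesis there is a clopen neighbourhood $W\in\mathcal{P}$ of $a$. Setting $U_n:=A\cap W$ gives a nonempty clopen set in $\mathcal{P}$, while $V_n:=V_{n-1}\setminus U_n\supseteq B$ remains infinite, so the induction continues.

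Each characteristic function $\chi_{U_n}$ then belongs to both $R$ and $S$: it is continuous since $U_n$ is clopen, has range $\{0,1\}$ and so lies in $C_c^*(X)$, has cozero-set closure equal to $U_n\in\mathcal{P}$ (the cozero set $U_n$ is already closed), and satisfies $\{x\in X:|\chi_{U_n}(x)|\geq 1/k\}=U_n\in\mathcal{P}$ for every $k\in\mathbb{N}$. Writing $T$ for either of $R$ or $S$, I let $I_n$ be the ideal of $T$ generated by $\chi_{U_1},\ldots,\chi_{U_n}$ and $J_n$ the ideal of $T$ generated by $\chi_{U_n},\chi_{U_{n+1}},\ldots$ (both well defined whether or not $T$ has an identity). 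Every element of $I_n$ is supported in $U_1\cup\cdots\cup U_n$, while every element of $J_{n+1}$ is supported in $U_{n+1}\cup U_{n+2}\cup\cdots$, so pairwise disjointness of the $U_k$ forces $\chi_{U_{n+1}}\notin I_n$ and $\chi_{U_n}\notin J_{n+1}$. This produces the strict chains $I_1\subsetneq I_2\subsetneq\cdots$ and $J_1\supsetneq J_2\supsetneq\cdots$, contradicting both Noetherianness and Artinianness of $T$.

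The main obstacle I foresee is the recursive construction of the $U_n$, where I must juggle three requirements at once: membership in $\mathcal{P}$ (handled by shrinking inside a local-$\mathcal{P}$ neighbourhood), nonemptiness and clopenness (handled by zero-dimensionality of $X$), and preservation of infinitude of $V_n$ so the recursion survives. Once that is in place, the chain argument via disjoint supports is essentially formal. Theorem \ref{t-6.3}, while not literally invoked, confirms that characteristic functions of clopen $\mathcal{P}$-sets are the natural generators to use inside both rings.
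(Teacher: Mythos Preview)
Your argument is correct and essentially parallel to the paper's, but the two proofs differ in how they manufacture the ``disjoint'' family that drives the chain. The paper first invokes the classical fact that every infinite Hausdorff space contains a countable relatively discrete subset $\mathbb{N}\subseteq X$ (Gillman--Jerison~0.13), then appeals to Theorem~\ref{t-6.3} to produce, for each $k$, a function $f_k\in C_{\mathcal P}(X)\cap C_c(X)$ whose cozero set isolates $k$ from the other integers; it then shows $\langle f_1,\dots,f_n\rangle\subsetneq\langle f_1,\dots,f_{n+1}\rangle$ by evaluating at $n+1$, and merely remarks that the Artinian and $C_\infty^{\mathcal P}$ cases are ``analogous''. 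You instead carry out a direct inductive bisection of $X$ using only zero-dimensionality and the local-$\mathcal P$ hypothesis, yielding pairwise disjoint clopen $\mathcal P$-sets $U_n$ and hence idempotents $\chi_{U_n}$ that simultaneously lie in both target rings; the support calculus then gives both an ascending and a descending strict chain in one stroke. Your route is more self-contained (no external lemma, no Theorem~\ref{t-6.3}) and actually proves all four negations explicitly rather than by allusion, at the modest cost of a slightly longer recursive construction; the paper's route is shorter on the page but leans on two outside results.
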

 The proof can be accomplished by making a close introspection of the reasonings made in the proof of the Theorem~ 1.1 in \cite{ref2}. Nevertheless to make the article self-contained and to highlight a few important remarks regarding the possible dearth of Noetherian Rings/ Artinian rings lying between $C_c^*(X)$ and $C_c(X)$, we wish to provide an alternatively framed regorous proof of the above theorem.\\

 \noindent Proof of the Theorem \ref{t-6.4}:
 First assume that $X$ is a finite set with $'n'$ elements. Then since $X$ is Hausdorff it becomes a discrete space. Consequently $C(X)=\mathbb{R}^X$, which is isomorphic to direct product of $\mathbb{R}\times \mathbb{R}\times ...\times \mathbb{R}$ ($n$ times). On the other hand since $X$ is locally $\mathcal{P}$ it follows that $C_\mathcal{P}(X)=C_\infty^\mathcal{P}(X)=C_c(X)=C(X)$ consequently $C_\mathcal{P}(X)\cap C_c(X)=C_\infty^\mathcal{P}(X)\cap C_c(X)=C_c(X)=C(X)$. Since the field $\mathbb{R}$ has just 2 ideals, it follows from Lemma \ref{l-6.2} that C(X) has just $2^n$ many ideals. Hence the rings $C_\mathcal{P}(X)\cap C_c(X)$ and $C_\infty^\mathcal{P}(X)\cap C_c(X)$ are both Noetherian and Artinian.
 
 Conversely, let $X$ be an infinite set. We shall show that thae ring $C_\mathcal{P}(X)\cap C_c(X)$ is not a Noetherian ring. Analogous arguments can be made to show that $C_\mathcal{P}(X)\cap C_c(X)$ is not an Artinian ring and nor is the ring  $C_\infty^\mathcal{P}(X)\cap C_c(X)$ Noetherian or Artinian. As $X$ is an infinite Hausdorff space it contains a copy of $\mathbb{N}$ (0.13, \cite{ref13}), So for each $k\in \mathbb{N}$ there exists an open set $W_k$ in $X$ such that $W_k\cap \mathbb{N}=\{k\}$. Since $X$ is locally $\mathcal{P}$ and zero-dimensional, we can employ Theorem \ref{t-6.3} to find for each $k\in \mathbb{N}$, an $f_k\in  C_\mathcal{P}(X)\cap C_c(X)$ such that $k\in X\setminus Z(f_k)\subset W_k$....(1). We now assert that the ideal $I=<f_1,f_2,...f_k,..>$ generated by these $f_k's$ in the ring  $C_\mathcal{P}(X)\cap C_c(X)$ can not be finitely generated and hence $C_\mathcal{ P}(X)\cap C_c(X)$ is not Noetherian.(A ring $R$ is Noetherian if and only if each ideal in $R$ is finitely generated: A standard result).\\
 
 \noindent Proof of the assertion: Choose $n\in N$. We show that the ideal $<f_1,f_2,...f_n>\varsubsetneqq I$ and that will do. Indeed from (0) and (1) it follows that $f_{n+1}(n+1)\neq 0 $, while $f_1(n+1)=f_2(n+1)=...=f_n(n+1)=0$. Thus there do not exist functions $l_1,l_2,...l_n\in  C_\mathcal{P}(X)\cap C_c(X)$ for which we can write: $ f_{n+1}=l_1f_1+l_2f_2+...l_nf_n$. Hence $f_{n+1}\in I \  \setminus  <f_1,f_2,...f_k,..>$.

\begin{remark}\label{r-6.5}
	Since for any $\mathcal{P}\in \Omega(X)$, $C_\mathcal{P}(X) \subseteq C_\infty^\mathcal{P}(X)$ an easy verification, it follows from Theorem \ref{t-6.3} that for any prescribed ring $R$ lying either between $C_\mathcal{ P}(X)\cap C_c(X)$ and $C_\infty^\mathcal{ P}(X)\cap C_c(X)$ or between $C_\mathcal{ P}(X)\cap C_c(X)$ and $C_\mathcal{ P}(X)\cap C_c^*(X)$, a zero-dimensional space $X$ is locally $\mathcal{P}$ if and only if $\{Z(f):f\in R\}$ is a base for the closed sets in $X$. With this observation in mind, if we make a close scrutiny into the proof of the converse part of Theorem \ref{t-6.4}, we get the following result.
\end{remark}

\begin{theorem}\label{t-6.6}
	Given $\mathcal{P}\in \Omega (X)$, if $X$ is an infinite zero-dimensional locally $\mathcal{P}$ space, then no ring lying between $C_\mathcal{ P}(X)\cap C_c(X)$ and $C_\infty^\mathcal{ P}(X)\cap C_c(X)$ is Noetherian (respectively Artinian) and also no ring lying between $C_\mathcal{P}(X)\cap C_c(X)$ and $C_\mathcal{ P}(X)\cap C_c^*(X)$ is Noetherian (respectively Artinian).
\end{theorem}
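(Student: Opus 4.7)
The plan is to mimic the construction from the proof of Theorem~\ref{t-6.4}, replacing its appeal to Theorem~\ref{t-6.3} by Remark~\ref{r-6.5}, and then to produce a strictly ascending chain and a strictly descending chain of ideals inside the intermediate ring $R$.

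First I would use the fact that an infinite Hausdorff space contains a copy of $\mathbb{N}$ as a relatively discrete subset, and pick, for each $k\in\mathbb{N}$, an open set $W_k$ in $X$ with $W_k\cap\mathbb{N}=\{k\}$. By Remark~\ref{r-6.5}, whenever $R$ is a ring sandwiched between $C_{\mathcal{P}}(X)\cap C_c(X)$ and $C_{\infty}^{\mathcal{P}}(X)\cap C_c(X)$, or between $C_{\mathcal{P}}(X)\cap C_c(X)$ and $C_{\mathcal{P}}(X)\cap C_c^*(X)$, the family $\{Z(f):f\in R\}$ is a closed base for $X$. Since $X\setminus W_k$ is closed and does not contain $k$, an application of this base property produces an $f_k\in R$ with $X\setminus W_k\subseteq Z(f_k)$ and $f_k(k)\neq 0$. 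The decisive separation property is then $f_k(k)\neq 0$ while $f_k(j)=0$ for every $j\in\mathbb{N}\setminus\{k\}$.

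For the Noetherian conclusion I would consider the ideal $I=\langle f_1,f_2,f_3,\ldots\rangle$ of $R$ and show that it is not finitely generated. If it were, then $I=\langle f_1,\ldots,f_n\rangle$ for some $n$, so $f_{n+1}=\sum_{i=1}^n l_i f_i$ with $l_i\in R$; evaluating both sides at the point $n+1\in X$ gives $0\neq f_{n+1}(n+1)=\sum_{i=1}^n l_i(n+1)f_i(n+1)=0$, a contradiction. For the Artinian conclusion I would instead form the descending chain $I_k=\langle f_k,f_{k+1},f_{k+2},\ldots\rangle$ of ideals of $R$ and verify $f_k\in I_k\setminus I_{k+1}$: any representation $f_k=\sum_{i\geq 1}l_i f_{k+i}$ (finite sum) with $l_i\in R$ would force $f_k(k)=\sum l_i(k)f_{k+i}(k)=0$ on evaluating at $k$, again contradicting $f_k(k)\neq 0$.

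The main obstacle, as I see it, is purely bookkeeping: one must carefully unpack Remark~\ref{r-6.5} to extract a single separating $f_k\in R$ (rather than merely an intersection of such zero sets) satisfying both $Z(f_k)\supseteq X\setminus W_k$ and $f_k(k)\neq 0$. Once that is in place, the same pointwise evaluation argument applied to the $\mathbb{N}$-indexed sequence $\{f_k\}$ simultaneously defeats both the ascending and the descending chain conditions, and the proof is insensitive to which of the two sandwich intervals $R$ is chosen from.
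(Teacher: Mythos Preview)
Your proposal is correct and follows exactly the route the paper intends: the paper's ``proof'' of Theorem~\ref{t-6.6} is just Remark~\ref{r-6.5} together with the instruction to re-run the converse of Theorem~\ref{t-6.4} inside $R$, and you have carried this out faithfully. Your explicit descending-chain argument for the Artinian conclusion is a welcome addition, since the paper merely asserts that ``analogous arguments can be made'' without writing one down.
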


We record below two special cases of Theorem \ref{t-6.6}, on choosing $\mathcal{P}\equiv$ the ideal of all compact sets in $X$ in the first part of the Theorem and on choosing $\mathcal{P}\equiv$ the ideal of all closed sets in $X$ in the second part of the theorem.
\begin{theorem}\label{t-6.7}
	\begin{enumerate}
		\item If $X$ is an infinite locally compact zero-dimensional space then no ring lying between $C_K(X)\cap C_c(X)$ and $C_\infty(X)$ $\cap C_c^*(X)$ is Notherian/ Artinian.
		
		\item For any infinite zero-dimensional space $X$, no intermediate ring $A_c(X)\in \Sigma_c(X)$ is Notherian/ Artinian.
	\end{enumerate} 
\end{theorem}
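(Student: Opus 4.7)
The plan is to deduce both parts of Theorem~\ref{t-6.7} as immediate specializations of Theorem~\ref{t-6.6}, with a very short verification that the identifications of the various function rings come out correctly.

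For part~(1), I would take $\mathcal{P}$ to be the ideal of all compact subsets of $X$. The first task is to record three unambiguous identifications: $C_{\mathcal{P}}(X)=C_K(X)$, $C_{\infty}^{\mathcal{P}}(X)=C_{\infty}(X)$, and the phrase ``locally $\mathcal{P}$'' collapses to ``locally compact''; these are the standard definitions. Applying Theorem~\ref{t-6.6} with this $\mathcal{P}$, no ring between $C_K(X)\cap C_c(X)$ and $C_{\infty}(X)\cap C_c(X)$ is Noetherian or Artinian. To reach the upper bound stated in part~(1), namely $C_{\infty}(X)\cap C_c^*(X)$, I would just observe the chain
\[
C_K(X)\cap C_c(X)\;\subseteq\; C_{\infty}(X)\cap C_c^*(X)\;\subseteq\; C_{\infty}(X)\cap C_c(X),
\]
the leftmost inclusion holding because compactly supported continuous functions are automatically bounded and vanish at infinity. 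Hence any ring sandwiched between $C_K(X)\cap C_c(X)$ and $C_{\infty}(X)\cap C_c^*(X)$ is also sandwiched in the interval covered by Theorem~\ref{t-6.6}, and the conclusion transfers.

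For part~(2), I would take $\mathcal{P}$ to be the ideal of all closed subsets of $X$. Then $C_{\mathcal{P}}(X)=C(X)=C_{\infty}^{\mathcal{P}}(X)$ (the membership conditions become vacuous because every closed set of $X$ lies in $\mathcal{P}$), and every space is trivially locally $\mathcal{P}$. Consequently
\[
C_{\mathcal{P}}(X)\cap C_c(X)=C_c(X),\qquad C_{\mathcal{P}}(X)\cap C_c^*(X)=C_c^*(X),
\]
so the second interval in Theorem~\ref{t-6.6}, namely $[C_{\mathcal{P}}(X)\cap C_c(X),\,C_{\mathcal{P}}(X)\cap C_c^*(X)]$, is precisely the family $\Sigma_c(X)$ of intermediate rings lying between $C_c^*(X)$ and $C_c(X)$. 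Hence Theorem~\ref{t-6.6} directly yields that no $A_c(X)\in\Sigma_c(X)$ is Noetherian or Artinian.

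The only subtlety is the little bookkeeping in part~(1): verifying the inclusion $C_K(X)\cap C_c(X)\subseteq C_{\infty}(X)\cap C_c^*(X)$ so that the stated interval sits inside one of the two intervals handled by Theorem~\ref{t-6.6}. Everything else is an unwinding of definitions and a direct appeal to Theorem~\ref{t-6.6}, so no further technical obstacle is anticipated.
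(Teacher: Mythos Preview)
Your proposal is correct and follows exactly the paper's approach: the paper presents Theorem~\ref{t-6.7} as two special cases of Theorem~\ref{t-6.6}, obtained by taking $\mathcal{P}$ to be the ideal of compact sets for part~(1) and the ideal of all closed sets for part~(2). In fact you supply slightly more detail than the paper does, since you explicitly verify the inclusion $C_K(X)\cap C_c(X)\subseteq C_\infty(X)\cap C_c^*(X)$ needed to fit the interval of part~(1) inside the first interval of Theorem~\ref{t-6.6}, a point the paper leaves implicit.
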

\section{Formula for $z^0$-ideals in intermediate rings.}

We first show that ideal $I$ in an intermediate ring $A_c(X)\in \Sigma(X)$ gives rise to an ideal of closed sets in $X$. Indeed fer any such $I$, we get
$\mathcal{P}_I^{A_c}=\{E\subseteq X:$ $E$ is closed in $X$ and there exists $f\in I$ such that $E\subseteq cl_X(X\setminus Z(f))$. If is easy to verify that $\mathcal{P}_I^{A_c}$ is an ideal of closed sets in $X$ i.e; $\mathcal{P}_I^{A_c}\in \Omega(X)$ and also that $I\subseteq C_{\mathcal{P}_I^{A_c}}(X)\cap A_c(X)$ $ \equiv $ $\{f\in A_c(X):cl_X(X\setminus Z(f))\in \mathcal{P}_I^{A_c} \}$. The following fact tells decisively when does equality occur in the last inclusion relation. Incidentally we get an explicit formula for $z^0$-ideals in the intermediate rings.
\begin{theorem}\label{t-7.1}
	Let $A_c(X)\in \Sigma_c(X)$. Then an ideal $I$ in $A_c(X)$ is a $z^0$-ideal in this ring if and only if there exists $\mathcal{P}\in \Omega(X)$ such that $I=C_{\mathcal{P}}(X)\cap A_c(X)$.
\end{theorem}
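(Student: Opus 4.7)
The plan is to set up the equivalence via a translation of Theorem~\ref{t-4.3} into the language used to define $\mathcal{P}_I^{A_c}$. Using the identity $\mathrm{Int}_X Z(h)=X\setminus \mathrm{cl}_X(X\setminus Z(h))$ valid for every $h\in A_c(X)$, Theorem~\ref{t-4.3} can be rephrased as
\[
\mathcal{P}_f=\bigl\{g\in A_c(X):\mathrm{cl}_X(X\setminus Z(g))\subseteq \mathrm{cl}_X(X\setminus Z(f))\bigr\}.
\]
With this reformulation in hand, the candidate $\mathcal{P}$ that will witness the theorem in one direction is the one already introduced in the discussion preceding the statement, namely $\mathcal{P}:=\mathcal{P}_I^{A_c}$; the fact that $\mathcal{P}_I^{A_c}\in\Omega(X)$ (including closure under finite unions, obtained from $\mathrm{cl}_X(X\setminus Z(f_1^2+f_2^2))=\mathrm{cl}_X(X\setminus Z(f_1))\cup \mathrm{cl}_X(X\setminus Z(f_2))$ and closure under closed subsets from the definition) is the only small bookkeeping item.

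For the ``only if'' direction, suppose $I$ is a $z^0$-ideal of $A_c(X)$; I will show that $I=C_{\mathcal{P}_I^{A_c}}(X)\cap A_c(X)$. The inclusion $I\subseteq C_{\mathcal{P}_I^{A_c}}(X)\cap A_c(X)$ is immediate: for $f\in I$, the closed set $\mathrm{cl}_X(X\setminus Z(f))$ trivially lies in $\mathcal{P}_I^{A_c}$. For the reverse inclusion, take $g\in C_{\mathcal{P}_I^{A_c}}(X)\cap A_c(X)$. By the definition of $\mathcal{P}_I^{A_c}$ there exists $f\in I$ with $\mathrm{cl}_X(X\setminus Z(g))\subseteq \mathrm{cl}_X(X\setminus Z(f))$; the reformulation of Theorem~\ref{t-4.3} above then gives $g\in \mathcal{P}_f$, and the $z^0$-property of $I$ forces $\mathcal{P}_f\subseteq I$, so $g\in I$.

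For the ``if'' direction, assume $I=C_{\mathcal{P}}(X)\cap A_c(X)$ for some $\mathcal{P}\in\Omega(X)$; I must show $I$ is a $z^0$-ideal in $A_c(X)$. First note that $I$ is automatically an ideal of $A_c(X)$ because $C_{\mathcal{P}}(X)$ is an ideal of $C(X)$. Now pick $f\in I$ and $g\in \mathcal{P}_f$. The reformulation of Theorem~\ref{t-4.3} yields $\mathrm{cl}_X(X\setminus Z(g))\subseteq \mathrm{cl}_X(X\setminus Z(f))$. Since $f\in I=C_{\mathcal{P}}(X)\cap A_c(X)$, the larger set $\mathrm{cl}_X(X\setminus Z(f))$ belongs to $\mathcal{P}$; because $\mathcal{P}$ is closed under taking closed subsets, $\mathrm{cl}_X(X\setminus Z(g))\in\mathcal{P}$ too, i.e., $g\in C_{\mathcal{P}}(X)$. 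Combined with $g\in A_c(X)$, this gives $g\in I$, completing the verification that $\mathcal{P}_f\subseteq I$.

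The only place requiring real care is the translation step: one must check that $\mathrm{Int}_X Z(f)\subseteq \mathrm{Int}_X Z(g)$ is genuinely equivalent to $\mathrm{cl}_X(X\setminus Z(g))\subseteq \mathrm{cl}_X(X\setminus Z(f))$, which is immediate from de Morgan. Once this is noted, both directions of the biconditional reduce to a direct unpacking of definitions together with an invocation of Theorem~\ref{t-4.3}; I expect no further obstacle, and in particular no use of the specific structure of $A_c(X)$ beyond the formula for $\mathcal{P}_f$ that has already been secured in Section~4.
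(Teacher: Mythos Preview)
Your argument is correct and follows essentially the same route as the paper's proof: both directions hinge on the de~Morgan translation between $\mathrm{Int}_X Z(f)\subseteq \mathrm{Int}_X Z(g)$ and $\mathrm{cl}_X(X\setminus Z(g))\subseteq \mathrm{cl}_X(X\setminus Z(f))$, combined with Theorem~\ref{t-4.3} and the downward-closure property of $\mathcal{P}$. The only cosmetic difference is that you frontload the reformulation of $\mathcal{P}_f$ once at the outset, whereas the paper performs the complement-taking inline in each direction.
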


\begin{proof}
	First assume that $I$ is a $z^0$-ideal in $A_c(X)$. In view of the observations foregoing this theorem, it is sufficient to show that  $C_{\mathcal{P}_I^{A_c}}(X)\cap A_c(X)\subseteq I$. So let $g\in  C_{\mathcal{P}_I^{A_c}}(X)\cap A_c(X)$ then $cl_X(X\setminus Z(g))\in {\mathcal{P}_I^{A_c}}$. Consequently there exists $f\in I$ such that  $cl_X(X\setminus Z(g))\subseteq cl_X(X\setminus Z(f))$. This implies on taking complement in $X$ that $Int_XZ(g)\supseteq Int_XZ(f)$, which further implies in view of Theorem \ref{t-4.3} that $g\in \mathcal{P}_f\equiv$ the intersection of all minimal prime ideals in $A_c(X)$ containing $f$. Since $f\in I$ and $I$ is a $z^0$-ideal in $A_c(X)$ it follows that $g\in I$. Thus we get: $I= C_{\mathcal{P}_I^{A_c}}(X)\cap A_c(X)$.
	
	To prove the other part of the theorem we show that for any $\mathcal{P}\in \Omega (X)$, $C_{\mathcal{P}}(X)\cap A_c(X)$ is a $z^0$-ideal in $A_c(X)$. Choose $f\in C_{\mathcal{P}}(X)\cap A_c(X)$ , then $cl_X(X\setminus Z(f))\in \mathcal{P}$. We need to verify that $\mathcal{P}_f\subseteq C_\mathcal{ P}(X)\cap A_c(X)$. So choose $g\in \mathcal{P}_f$, then by Theorem \ref{t-4.3} $Int_XZ(f)\subseteq Int_X(g)$, which implies obviously that  $cl_X(X\setminus Z(g))\subseteq  cl_X(X\setminus Z(f))$. Since $f\in C_\mathcal{P}(X)$ it follows that  $cl_X(X\setminus Z(f))\in \mathcal{P}$. As $\mathcal{P}$ is an ideal of closed sets in $X$, this further implies that $ cl_X(X\setminus Z(g))\in \mathcal{P}$ i.e; $g\in C_\mathcal{P}(X)\cap A_c(X)$. Thus $\mathcal{ P}_f \subseteq  C_{\mathcal{P}}(X)\cap A_c(X)$.
\end{proof}

 It is established recently in \cite{ref1}, Theorem 5.2 that an ideal $I$ in $C(X)$ with $X$, Tychonoff is a $z^0$-ideal in $C(X)$ if and only if there exists $\mathcal{ P}\in \Omega(X)$ such that $I=C_\mathcal{ P}(X)$. Therefore we can make the following comments.
 
 \begin{remark} \label{r-7.2}
 	$z^0$-ideals in the intermediate rings $A_c(X)\in \Sigma_c(X)$ with $X$, zero-dimensional are exactly the contractions of $z^0$-ideals in $C(X)$.
 \end{remark}

\newpage

\end{document}